\theoremstyle{plain}
\newtheorem{theorem}{Theorem}[section]
\newtheorem{proposition}[theorem]{Proposition}
\newtheorem{lemma}[theorem]{Lemma}
\newtheorem{corollary}[theorem]{Corollary}
\theoremstyle{definition}
\newtheorem{definition}[theorem]{Definition}
\newtheorem{remark}[theorem]{Remark}
\newtheorem{conjecture}{Conjecture}
\newcommand{\Q}{\mathbb{Q}}
\newcommand{\N}{\mathbb{N}}
\newcommand{\Z}{\mathbb{Z}}
\newcommand{\R}{\mathbb{R}}
\newcommand{\sub}{\subseteq}
\newcommand{\set}[2]{\{#1\mid #2\}}
\newcommand{\conv}{\mathrm{conv}}
\newcommand{\gen}[1]{\langle #1\rangle}
\newcommand{\ri}{\mathrm{ri}}
\newcommand{\x}{\textit{\textbf{x}}}
\begin{document}
\title[Torsion factors of commutative monoid semirings]{Torsion factors of commutative monoid semirings}

\author[M.~Korbel\'a\v{r}]{Miroslav~Korbel\'a\v{r}}
\address{Department of Mathematics, Faculty of Electrical Engineering, Czech Technical University in Prague, Technick\'{a} 2, 166 27 Prague 6, Czech Republic}
\email{korbemir@fel.cvut.cz}

\thanks{The author acknowledges the support by the project CAAS CZ.02.1.01/0.0/0.0/16\_019/0000778 and by the bilateral Austrian Science Fund (FWF) project I 4579-N and Czech Science Foundation (GA\v{C}R) project 20-09869L ``The many facets of orthomodularity''}

\keywords{commutative semiring, idempotent, torsion, regular, divisible, convex cone}
\subjclass[2010]{primary: 16Y60, 20M14, secondary: 11H06, 52A20}

% = Abstract ========================================================
\begin{abstract}
Let $P$ be a finitely generated commutative semiring. It was shown recently that if $P$ is a parasemifield (i.e. the multiplicative reduct of $P$ is a group) then $P$ cannot contain the positive rationals $\Q^+$ as its subsemiring. Equivalently, a commutative parasemifield $P$ finitely generated as a semiring is additively divisible if and only if $P$ is additively idempotent.    
 We generalize this result using weaker forms of these additive properties  to a broader class of commutative semirings in the following way. 
 
 Let $S$ be a semiring that is a factor of a monoid semiring $\N[\mathcal{C}]$ where $\mathcal{C}$ is a submonoid of a free commutative monoid of finite rank. Then the semiring $S$ is additively almost-divisible if and only if $S$ is  torsion. In particular, we show that if $S$ is a ring then $S$ cannot contain any non-finitely generated subring of $\Q$.
\end{abstract}

\maketitle

\section{Introduction}

In this paper we study properties of additive semigroups of commutative semirings. We generalize a result that was recently achieved for  commutative parasemifields \cite{parasemifields}  to a broader class of factors of certain type of monoid semirings. 
Our generalization will concern torsion properties of additive semigroups of commutative semirings in connection to a weaker types of divisibility investigated in \cite{torsion_div,div_gen}. 
As an immediate corollary we provide a characterization of certain class of commutative additively regular semirings. According to \cite{ilin,katsov},  semirings that are additively regular play an essential r\^{o}le in the theory of injective 
semimodules. 

By a \emph{semiring} $S(+,\cdot)$ we mean a non-empty set $S$ equipped with two operations, the addition $+$ and the multiplication $\cdot$ , such that $S(+)$ is a commutative semigroup, $S(\cdot)$ is a semigroup  and the multiplication distributes over the addition. 
A semiring $S$ is \emph{commutative} if $S(\cdot)$ is a commutative semigroup and  a \emph{parasemifield} if $S(\cdot)$ is a group. 
(For the detailed definitions of further properties of semirings and semigroups as idempotency, divisibility and others see Section \ref{preliminaries}.)

Every parasemifield contains either the semiring of positive rationals $\Q^{+}$ or it is additively idempotent.
Additively idempotent parasemifields are term-equivalent \cite{weinert} to significant algebraic objects - the lattice-ordered groups (that appear e.g. in functional analysis \cite{luxemburg} or in logic in connection with  MV-algebras \cite{mundici,mundici2}).
According to this term-equivalence, a lattice-ordered group is finitely
generated if and only if it is finitely generated as a parasemifield (where beside the operations $+$ and $\cdot$ also the inversion $a\mapsto a^{-1}$ is used). However, such a  case is not
equivalent to the property of being finitely generated as a semiring which is stronger.

Recently it was proved that every commutative parasemifield that is finitely generated as a semiring is additively idempotent \cite{parasemifields}. An essential property in the proof was the divisibility of the additive semigroup of the given parasemifield. By the result of \cite{parasemifields}, divisibility and idempotency are equivalent for additive semigroups of parasemifields that are finitely generated as semirings.  

Idempotent semigroups, on the other hand, can also be seen as a special case of torsion semigroups. For a finitely generated commutative semiring $S$ we can in this context ask for a weaker version of divisibility of $S(+)$ that will be equivalent to torsioness of $S(+)$. In \cite{torsion_div} such a suitable property, the almost-divisibility,  was studied and the following conjecture was suggested there.

\begin{conjecture}\label{conjecture}
Let $S$ be a finitely generated commutative semiring. Then the semigroup $S(+)$ is torsion if and only if $S(+)$ is almost-divisible. 
\end{conjecture}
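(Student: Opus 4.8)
The plan is to establish the two implications separately; ``$S(+)$ torsion $\Rightarrow$ $S(+)$ almost-divisible'' is the elementary one and needs no finite-generation hypothesis, while ``$S(+)$ almost-divisible $\Rightarrow$ $S(+)$ torsion'' is the substance and is where the structure of finitely generated commutative semirings enters. For the first implication, given $a\in S$ the monogenic subsemigroup $\langle a\rangle$ of $S(+)$ is finite, so by the structure of finite monogenic semigroups there are an index $i$ and a period $p$ with $(i+p)a=ia$, and the kernel $\{ia,\dots,(i+p-1)a\}$ is a cyclic group whose neutral element $e$ satisfies $e+e=e$; hence $ne=e$ and $e\in nS$ for every $n\ge1$, so every element of $S$ has an additively divisible multiple, which is almost-divisibility.

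For the converse I would reduce to a one-element assertion: \emph{a factor $S=\N[\mathcal C]/\rho$ of a monoid semiring, with $\mathcal C\sub\N^n$ a submonoid, has no additively divisible element of infinite additive order.} This suffices, since a non-torsion $S(+)$ contains an element $a$ of infinite order, almost-divisibility produces a divisible multiple $ma$, and $ma$ again has infinite order; and since every finitely generated commutative semiring with identity is a homomorphic image of some $\N[\N^n]$, this would settle Conjecture~\ref{conjecture} for all such $S$ (the identity-free case remaining open). So suppose $[f]\in S$, $f\in\N[\mathcal C]$, is divisible of infinite order: for each $k\ge1$ there is $g_k\in\N[\mathcal C]$ with $f\mathrel{\rho}kg_k$, while $jf\not\mathrel{\rho}j'f$ for $j\ne j'$.

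The core is a convex-geometric analysis in $\N[\mathcal C]$. Since $\mathcal C\sub\N^n$ the convex cone $\sigma$ generated by $\mathcal C$ is pointed, so one may fix a functional $u\in\ri(\sigma^{\vee})$ with $u>0$ on $\mathcal C\setminus\{0\}$. As addition in $\N[\mathcal C]$ admits no cancellation, $\operatorname{supp}(\varphi\psi)=\operatorname{supp}(\varphi)+\operatorname{supp}(\psi)$ and $\operatorname{supp}(\varphi+\psi)=\operatorname{supp}(\varphi)\cup\operatorname{supp}(\psi)$, so the $u$-degree $D_u(\varphi)=\max\{u(c):c\in\operatorname{supp}(\varphi)\}$ is additive under products, a maximum under sums, and for generic $u$ the $u$-leading term is multiplicative. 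In $\N[\mathcal C]$ alone the relation $f=kg_k$ forces $\operatorname{supp}(f)=\operatorname{supp}(g_k)$ and $k$ to divide every coefficient of $f$, which for all $k$ is impossible when $f\ne0$; the whole difficulty is that the congruence $\rho$ may distort supports, degrees and coefficients arbitrarily. My plan to overcome this is to first pin down how divisibility interacts with the additive idempotents of $S$ (which are themselves divisible and act as absorbing ``points at infinity''), and then to iterate almost-divisibility so as to extract from the single divisible infinite-order element arbitrarily deep ``roots'' of one fixed element; monitoring these roots through the family of functionals $u\in\ri(\sigma^{\vee})$ and exploiting that the cone $\sigma$ has only finitely many extreme rays should force the appearance inside $S$ of a sub-semiring isomorphic to a non-finitely-generated subsemiring of $\Q^{+}$ — the desired contradiction. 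In the ring case this is exactly the assertion that a finitely generated $\Z$-algebra cannot contain $\Z[1/p:p\in T]$ for an infinite set of primes $T$, whose transparent reason is that the spectrum of such an algebra maps to $\operatorname{Spec}\Z$ with constructible image.

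The step I expect to be the genuine obstacle is precisely this passage through $\rho$: the functionals $u$ yield quantities that are only sub- or super-additive modulo $\rho$, never $\rho$-invariant, so no contradiction is visible on the nose from degrees or Newton polytopes. The heart of the matter must be to show that almost-divisibility is rigid enough that, once the idempotent part is controlled, the relations $f\mathrel{\rho}kg_k$ genuinely assemble into a torsion-free divisible — hence $\Q$-like — sub-object of $S$; the finiteness built into $\N[\mathcal C]$ (finitely many directions in $\mathcal C$ and the polyhedrality it imposes on the Newton data) then closes the argument.
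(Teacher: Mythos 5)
Your first implication (torsion $\Rightarrow$ almost-divisible) is correct and is essentially the paper's own observation: a multiple of $a$ lands in the kernel of the finite monogenic semigroup $\gen{a}$, which is a group, and is therefore strongly almost-divisible. The converse, however, is not proved; it is a plan whose decisive step you yourself label ``the genuine obstacle.'' Two concrete problems. First, your reduction is misstated: almost-divisibility of $a$ gives only that some multiple $ka$ is \emph{strongly} almost-divisible (divisible by infinitely many primes), not divisible by every $k$, so the target statement ``no additively divisible element of infinite order'' would not suffice, and your relations $f\mathrel{\rho}kg_k$ for all $k$ are not available. Second, the intended contradiction --- forcing a copy of a non-finitely-generated subsemiring of $\Q^+$ inside $S$ and declaring this impossible --- assumes exactly what must be proved: in the paper that impossibility is Theorem 5.5(iii), \emph{deduced from} the torsion theorem, not an independent input. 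Your remark that a finitely generated $\Z$-algebra cannot contain $\Z[1/p:p\in T]$ for infinite $T$ is true (and is, in substance, the cited Theorem [torsion\_div, 2.5]), but the passage from the semiring $S$ to a ring to which this applies is the entire content of the paper and is nontrivial.

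The paper's route is: replace $S$ by the subsemiring $Q_S$ of elements with an additive complement to a multiple of $1_S$, form its Grothendieck ring $G(Q_S)$, and observe that $G(Q_S)$ is a factor of a monoid ring $\Z[\mathcal{B}]$ where $\mathcal{B}\sub\N_0^n$ is a submonoid that need \emph{not} be finitely generated --- so the known finitely-generated-ring result cannot be invoked directly. The core technical work (Sections 3--4) decomposes the saturated closure of $\mathcal{B}$ into the relatively open faces of its convex cone and proves, by downward induction on face dimension using the ideal $\mathcal{N}_{\mathcal{T}}$ and Lemma \ref{decomp_lemma} to push elements back into $\Z[\mathcal{C}]$, that every monomial class is torsion-nilpotent; only at the top-dimensional and quotient stages is the finitely generated ring theorem applied. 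Your convex-geometric instinct (functionals on the cone, finitely many extreme rays) points in the right direction, but without the face decomposition, the induction on dimension, and the $Q_S$/$G(Q_S)$ transfer, there is no mechanism that survives the congruence $\rho$, and the proof is incomplete.
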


In this paper we confirm this conjecture under an additional assumption that $S$ has a unity. Notice that such a result implies immediately Theorem 1.2 on parasemifields in \cite{parasemifields}, as the only parasemifields with torsion additive semigroup are those that are additively idempotent. 

We prove, in fact, a stronger result. Denote by $\mathfrak{S}$ the class consisting of all semirings that are factors of monoid semirings $\N[\mathcal{C}]$, where $\mathcal{C}$ is any submonoid of a  commutative monoid of a finite rank (see Section \ref{preliminaries} for the details). Clearly, every finitely generated semiring with a unity belongs to $\mathfrak{S}$, but many other semirings in $\mathfrak{S}$ are not of this kind.  In Theorem \ref{main-theorem}(i) we show that the equivalence  in Conjecture \ref{conjecture} can be extended to all semirings from $\mathfrak{S}$. Consequently, every semiring $S\in \mathfrak{S}$ cannot contain certain non-finitely generated subsemirings of $\Q^+$ (Theorem \ref{main-theorem}(iii)). In particular, we prove that if $S\in\mathfrak{S}$ is a \emph{ring}, then $S$ cannot contain \emph{any non-finitely} generated subring of the field of rationals $\Q$ (Corollary \ref{corollary_rings}).

Finally, we deal with those torsion elements in a semigroup that generate subgroups. A semiring $T$ where every element lies in some (generally infinite) subgroup of $T(+)$ is called additively regular and, due to the commutativity of $T(+)$, this semiring $T$ is also additively inverse. Regularity and inversion are classical properties studied in semigroups \cite{howie,lawson,petrich}. Within the \emph{additive} semigroups of semirings these properties were investigated, e.g., in   \cite{karvellas_2,pondelicek,sokratova,zeleznikow_1}. There is a remarkable characterization of additively regular semirings  that was achieved quite recently \cite{ilin}. The additively regular semirings (with a unity and a zero) are characterized as  precisely those semirings (with a unity and a zero)  where every semimodule has an injective envelope. 

In this paper we provide a different kind of a partial characterization. We show that a semiring $S\in\mathfrak{S}$ is both additively regular and torsion if and only if the semigroup $S(+)$ is strongly almost-divisible (Theorem \ref{main-theorem}(ii)). In this way we confirm Conjecture I in \cite{div_gen} for the case of semirings with a unity. 

All results in this paper concern commutative semirings. For \emph{non-commutative} cases even the problem whether a finitely generated \emph{ring} may contain the field $\Q$ seems to be still open.

Our approach will be rather geometrical. In comparison with \cite{parasemifields} our method will be substantially generalized and simplified.

\section{Preliminaries}\label{preliminaries}

By $\N$ we denote the set all of positive integers. In a semigroup $A(+)$ an element $a\in A$ will be called
\begin{itemize}
 \item \emph{idempotent} if $a+a=a$;
 \item \emph{torsion} if the semigroup $\set{\ell a}{\ell\in\N}$ is finite;
 \item \emph{regular} if there is $b\in A$ such that $a=a+b+a$;
 \item \emph{divisible} if for every $n\in\N$ there is $c\in A$ such that $a=nc$;
 \item \emph{strongly almost-divisible} if there is an infinite set of prime numbers $P\sub\N$ such that for every $p\in P$ there is $c\in A$ such that $a=pc$;
 \item \emph{almost-divisible} if there is $k\in\N$ such that the element $b=ka$ is strongly almost-divisible.
\end{itemize}

We also assign these properties to the semigroup $A$ itself, if every element of $A$ has the respective property (e.g., $A$ is \emph{regular}, if every element of $A$ is regular). Let us still mention that the semigroup $A$ is \emph{inverse} if it is regular and all idempotents of $A$ mutually commute. 

Similarly, we call a semiring $S(+,\cdot)$ to be \emph{additively regular} (\emph{additively idempotent}, etc.) if the additive semigroup $S(+)$ is regular (idempotent, etc.). 
According to a convention in ring theory, we make the only exception for the torsion property and call the semiring  $S$ to be \emph{torsion} if $S(+)$ is a torsion semigroup.
If the semiring $S$ has a (multiplicative) unity we will denote this element by $1_S$.

\begin{remark}
Let $A$  be a semigroup. An element $a\in A$ is both regular and torsion if and only if $a$ lies in a finite subgroup $G$ of $A$. Let $G$ be of order $n\in\N$. Then, obviously, $a$ is strongly almost-divisible (within $G$) with respect to the infinite set $P$ of all prime numbers $p\in\N$ such that $\gcd(p,n)=1$. With help of this observation, the following diagram of implications between the respective properties can be easily verified.
\begin{equation*}\label{diagram}
\begin{array}[c]{ccccc}
\text{idempotent}\ & \Longrightarrow &\  \text{regular and torsion}\  & \Longrightarrow & \text{torsion}  \\
\Downarrow & & \Downarrow & & \Downarrow \\
\text{divisible} & \Longrightarrow &\  \text{strongly almost-divisible}\  & \Longrightarrow &\ \text{almost-divisible} 
\end{array}
\end{equation*}

Our intention is to study conditions when the vertical implications turn into equivalences in  semirings. 
\end{remark}

Every vector space in this paper is assumed to be a \emph{real} vector space. 
For a subset $M\sub\R^n$ we denote by $\gen{M}$  the \emph{real vector subspace of $\R^{n}$ generated by $M$} and by $\conv(M)$ the \emph{convex hull of $M$}. Further, we denote by $\dim(M)$ the \emph{dimension of the convex hull} $\conv(M)$. The set $M$ is a  \emph{cone} if $M$ is a convex set such that for every non-negative real number $\lambda$ and every $u\in M$ we have $\lambda u\in M$. 
Notice that in the case when $M$ is a cone or a submonoid of $\R^n(+)$, the number $\dim(M)$ is the dimension of the vector space $\gen{M}$. 

When working with a convex set $C\sub\R^{n}$ we will use the notion of the \emph{relative interior of} $C$, denoted as $\ri(C)$ (see e.g. \cite{polytopes}). It is defined as the interior of $C$ with respect to the affine hull of $C$. 

By $\N_{0}^{n}$ we denote the set of all $n$-tuples of non-negative integers.
For a submonoid $\mathcal{C}$ of the free commutative monoid $\N_{0}^{n}(+)$ we define its  closure $\widetilde{\mathcal{C}}$ by setting $$\widetilde{\mathcal{C}}=\gen{\mathcal{C}}\cap \N_{0}^{n}\ .$$

In the sequel we will use the polynomial ring $\Z[x_1,\dots,x_n]$, i.e., we will work with polynomials with integer coefficients over a set of (commuting) variables $x_1,\dots,x_n$.  For $\alpha=(a_{1},\dots,a_{n})\in\N_{0}^{n}$ we put $\x^{\alpha}=x_{1}^{a_{1}}\dots x_{n}^{a_{n}} $ and for a submonoid $\mathcal{C}$ of $\N_{0}^{n}(+)$ we denote by $\Z[\mathcal{C}]$ the subring of $\Z[x_1,\dots,x_n]$ generated by the set of monomials $\set{\x^{\alpha}}{\alpha\in\mathcal{C}}$. 
The ring $\Z[\mathcal{C}]$ is a monoid ring based on the monoid $\mathcal{C}$ and the ring $\Z$. The unity of $\Z[\mathcal{C}]$ will be denoted simply by $1$. By $\N[\mathcal{C}]$ we denote the subsemiring of $\Z[\mathcal{C}]$ generated by the set $\set{\x^{\alpha}}{\alpha\in\mathcal{C}}$. To be precise, $\N[\mathcal{C}]=\set{\sum_{i=1}^m k_i \x^{\alpha_i}}{m\in\N, k_i\in\N, \alpha_i\in\mathcal{C}}\sub\Z[\mathcal{C}]$.

\section{Decomposition of saturated submonoids of $\N_{0}^n(+)$}\label{section 1}

A submonoid $\mathcal{C}$ of $\N_{0}^n(+)$ is called \emph{saturated} if for every $\alpha\in\N_{0}^n$ and every $k\in\N$ the condition $k\alpha\in\mathcal{C}$ implies that $\alpha\in\mathcal{C}$. This property can be interpreted in the way that there are no ``holes'' in the monoid $\mathcal{C}$. Obviously, the closure $\widetilde{\mathcal{C}}$ of $\mathcal{C}$ is always a saturated monoid.

For our considerations in further sections, we will first decompose a saturated monoid $\mathcal{C}\sub \N_{0}^{n}$ into saturated submonoids that will correspond to relatively open faces of the cone $K=\conv(\mathcal{C})\sub\R^n$ (Definition \ref{canonical_decomp} and Theorem \ref{monoid_decomp}). 
For the convenience of the reader we will provide more detailed proofs.

\begin{proposition}\label{cone_decomp}
For every cone $K$  in $\R^n$ there is a decomposition $\mathbf{K}=\set{A_i}{i\in I}$ of $K$ into disjoint union of relatively open convex subsets $A_i$ of $K$. 

Moreover, for every $A\in\mathbf{K}$ the set $A\cup\{0\}$ is a cone and for all $x\in A$ and $y\in K\setminus\gen{A}$, there is a relatively open convex set $B\in \mathbf{K}$ such that $\dim(B)>\dim(A)$ and the relative interior of the line segment $\conv(\{x,y\})$ lies in $B$.
\end{proposition}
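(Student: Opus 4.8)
The plan is to take $\mathbf{K}$ to be the partition of $K$ by its relatively open faces. Recall that a (closed) face of a convex set $K$ is a convex subset $F$ such that whenever an open segment $\ri(\conv\{x,y\})$ with $x,y\in K$ meets $F$, both endpoints $x,y$ lie in $F$. The faces of $K$ form a lattice, $K$ itself is a face, and every point of $K$ lies in the relative interior of exactly one face — so setting $A_F=\ri(F)$ as $F$ ranges over all faces gives a decomposition of $K$ into disjoint relatively open convex sets. The first thing I would do is recall/establish these standard facts about the face lattice of a cone (for a cone, faces are themselves subcones and $\ri(F)\cup\{0\}=F$ when $F$ is a face through $0$, which it always is here since $0\in F$ forces $F$ a cone; more carefully, each face of a cone is a cone, hence $A_F\cup\{0\}$ is a cone). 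This handles the existence statement and the ``$A\cup\{0\}$ is a cone'' clause.

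For the ``moreover'' clause, fix $A=\ri(F)$ for a face $F$, pick $x\in A=\ri(F)$ and $y\in K\setminus\gen{A}=K\setminus\gen{F}$. Let $z$ be a point in the relative interior of the segment $\conv\{x,y\}$, and let $B\in\mathbf{K}$ be the relatively open piece containing $z$, i.e. $B=\ri(G)$ for the unique face $G$ with $z\in\ri(G)$. The key step is to show $\dim G>\dim F$. I would argue as follows. Since $z\in\ri(G)$ and $x\in\conv\{x,y\}$ is such that the open segment through $z$ (extended slightly past $z$ toward $x$, using $z\in\ri(\conv\{x,y\})$) stays in $K$, the face property of $G$ forces $x\in G$, and likewise $y\in G$. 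Hence $G\supseteq \conv(F\cup\{y\})$: indeed $x\in\ri(F)\cap G$ and $G$ is a face, so $G$ contains $F$ (a face containing a relative-interior point of $F$ contains all of $F$); and $G$ contains $y$. Therefore $\gen{G}\supseteq \gen{F}+\R y$, and since $y\notin\gen{F}$ this is a strictly larger subspace, giving $\dim B=\dim G=\dim\gen{G}>\dim\gen{F}=\dim A$. Finally $B\ni z$ and $B$ is relatively open convex, so the whole relative interior of $\conv\{x,y\}$ — being a connected relatively open subset of $K$ meeting the single piece $B$ at $z$, and each piece being relatively open — lies in $B$; more directly, if $z'\in\ri(\conv\{x,y\})$ then the same face argument puts $z'$ in a face $G'$ with $x,y\in G'$, and minimality/uniqueness of the face whose relative interior contains a point, together with $x\in\ri(F)$, pins down $G'=G$.

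The main obstacle I anticipate is purely bookkeeping with relative interiors: making precise that the open segment $\ri(\conv\{x,y\})$ can be ``pushed past'' $z$ within $K$ so as to apply the defining property of a face to conclude $x\in G$ and $y\in G$ — this uses that $z\in\ri(\conv\{x,z''\})$ for some $z''\in K$ on the far side, which holds because $z$ is in the \emph{relative} interior of $\conv\{x,y\}$ and $x\in K$. One must also be a little careful that the relevant convex sets are the faces of $K$ as a convex set (not, say, only the exposed faces), so that ``every point lies in the relative interior of exactly one face'' is available; this is standard (see, e.g., the reference \cite{polytopes}). Apart from that, each step is routine convex geometry.
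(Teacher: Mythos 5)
Your proposal is correct, and the decomposition it produces --- the partition of $K$ into the relative interiors of its faces --- is the same one the paper intends (the paper even calls its pieces ``relatively open faces''); the difference lies in how the faces are realized and in how much is actually verified. The paper's one-line proof assigns to each $x\in K$ the unique maximal-dimensional subspace $W_x$ with $x\in\ri(W_x\cap K)$ and partitions $K$ by the equivalence $W_x=W_y$, so each piece has the explicit form $\ri(W\cap K)$, and it leaves all remaining claims as ``easy to verify''. You instead use the standard extreme-subset definition of a face together with the theorem that the relative interiors of the nonempty faces partition a convex set, and you carry out the ``moreover'' clause in full: the face $G$ with $z\in\ri(G)$ absorbs both endpoints $x,y$ of the segment, hence contains $F$ (because it meets $\ri(F)$) and the point $y\notin\gen{F}$, which forces $\dim G>\dim F$; and every point of $\ri(\conv\{x,y\})$ determines the same $G$, since the smallest face containing a relative-interior point of a convex subset is the smallest face containing that subset. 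All of these steps are sound. Two minor points worth recording: every nonempty face $F$ of the cone $K$ contains $0$ and satisfies $F=\mathrm{aff}(F)\cap K=\gen{F}\cap K$, so your pieces do have the form $\ri\bigl(\gen{A}\cap K\bigr)$ --- this is the feature of the construction that the paper silently relies on later in the proof of Lemma~\ref{convex2}(iii) --- and the claim that $\ri(F)\cup\{0\}$ is a cone uses the easy observation that $\lambda F=F$, hence $\lambda\,\ri(F)=\ri(F)$, for every $\lambda>0$.
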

\begin{proof}
It is easy to verify that the following construction provides the desired decomposition.
For every $x\in K$ there is a unique vector space $W_{x}$ of the maximal dimension such that $x$ is a relatively inner point of the convex set $W_{x}\cap K$. Now, set a relation $\sim$ on $K$ as  $x\sim y$  if and only if $W_x=W_y$. This relation is an equivalence and its partition sets are the desired relatively open faces of $K$. In particular, such a face $A$ is of the form $A=\ri(W_x \cap K)$, where $x\in A$.
\end{proof}

\begin{lemma}\label{fin_gen}
(i) Let $W\sub\R^n$ be a vector space with a basis that consists of vectors from $\Q^{n}$. Then the monoid $W\cap\N_{0}^n$ is finitely generated.

(ii) Let $C=\conv(a_1,\dots,a_k)$ be the convex hull of the points $a_1,\dots,a_k\in\R^{n}$. If $y\in C$ is a proper convex combination of $a_1,\dots,a_k$ (i.e., $y=\sum^{k}_{i=1}\lambda_i a_i$, where $0<\lambda_i<1$ for every $i=1,\dots, k$ and $\sum^{k}_{i=1}\lambda_i=1$) then $y\in\ri(C)$. 
\end{lemma}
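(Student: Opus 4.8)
The plan is to treat the two parts independently: part~(i) is a version of Gordan's lemma on lattice points of rational cones, and part~(ii) is the standard description of the relative interior of a polytope in terms of barycentric coordinates.

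For~(i), I would first rewrite $W\cap\N_0^n=\sigma\cap\Z^n$, where $\sigma:=W\cap\R_{\ge 0}^n$. Since $W$ has a basis of vectors from $\Q^n$, its orthogonal complement also has a rational basis, so $W=\set{x\in\R^n}{Ax=0}$ for some integer matrix $A$; hence $\sigma$ is a rational polyhedral cone. By the Minkowski--Weyl theorem $\sigma$ is finitely generated as a cone, and since the extreme rays of a rational cone are spanned by rational vectors we may pick generators $v_1,\dots,v_m\in\Z^n$ with $\sigma=\set{\sum_{i=1}^m t_iv_i}{t_i\ge 0}$. Put $Z=\set{\sum_{i=1}^m t_iv_i}{0\le t_i\le 1}$; this set is bounded, so $Z\cap\Z^n$ is finite (and it contains each $v_i$ as well as $0$). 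I claim it generates the monoid $\sigma\cap\Z^n$: given $w\in\sigma\cap\Z^n$, choose $t_i\ge 0$ with $w=\sum_i t_iv_i$ and set $r:=\sum_i(t_i-\lfloor t_i\rfloor)v_i$; then $r=w-\sum_i\lfloor t_i\rfloor v_i\in\Z^n$ and $r\in Z$, so $r\in Z\cap\Z^n$, and $w=\sum_i\lfloor t_i\rfloor v_i+r$ exhibits $w$ as a non-negative integer combination of elements of $Z\cap\Z^n$.

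For~(ii), I would consider the hyperplane $H=\set{\mu\in\R^k}{\sum_{i=1}^k\mu_i=1}$ and the affine map $\bar\phi\colon H\to\R^n$ given by $\bar\phi(\mu)=\sum_{i=1}^k\mu_ia_i$. Its image is precisely the affine hull $\mathrm{aff}(C)$, so $\bar\phi\colon H\to\mathrm{aff}(C)$ is a surjective affine map between finite-dimensional affine spaces, and such a map is open (it comes from a surjective linear map). The set $U_0=\set{\mu\in H}{\mu_i>0\text{ for all }i}$ is open in $H$, so $U:=\bar\phi(U_0)$ is relatively open in $\mathrm{aff}(C)$; moreover $U\sub\bar\phi(\Delta)=C$, where $\Delta=\set{\mu\in H}{\mu_i\ge 0}$ is the standard simplex. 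As $y$ is a proper convex combination, $y=\bar\phi(\lambda)$ with $\lambda\in U_0$, hence $y\in U$, so $y$ has a neighbourhood in $\mathrm{aff}(C)$ contained in $C$; that is, $y\in\ri(C)$.

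The only point that needs real care is the reduction in~(i) to a rational polyhedral cone together with the choice of integer ray generators; once that is in place, the Gordan-type decomposition and the whole of~(ii) are routine. If one prefers to avoid invoking Minkowski--Weyl, part~(i) can instead be proved by induction on $\dim W$, splitting off a facet of $\sigma$, but the cone-theoretic argument above is the most economical.
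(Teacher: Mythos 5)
Your proof is correct, but both parts take a different route from the paper. For (i), the paper simply cites Gordan's lemma to get that $W\cap\Z^n$ is a finitely generated monoid and then invokes a second quoted result (\cite[Corollary 2.11]{polytopes}) to conclude that the intersection of the two finitely generated monoids $W\cap\Z^n$ and $\N_0^n$ is again finitely generated; you instead give a self-contained proof, realizing $W\cap\N_0^n$ as $\sigma\cap\Z^n$ for the rational polyhedral cone $\sigma=W\cap\R_{\ge0}^n$ and running the fundamental-parallelepiped argument. Your version is longer but needs only Minkowski--Weyl, whereas the paper's is a two-line citation; both are sound. For (ii), the paper reduces to the simplex case by assuming (after reordering) that $a_1,\dots,a_m$ are affinely independent with $a_{m+1},\dots,a_k\in\conv(a_1,\dots,a_m)$ and rewriting $y$ as a proper combination of $a_1,\dots,a_m$; you instead observe that the parametrization $\mu\mapsto\sum_i\mu_ia_i$ is a surjective, hence open, affine map from the hyperplane $\sum_i\mu_i=1$ onto the affine hull of $C$, so the image of the open positive orthant piece is a relatively open subset of $C$ containing $y$. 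Your argument is arguably the more robust of the two: the paper's ``without loss of generality'' step is delicate (for points in convex position, e.g.\ the vertices of a square, no affinely independent subfamily contains the others in its convex hull, so the reduction as literally stated needs repair), while the open-mapping argument applies uniformly with no case analysis. Both approaches are standard; nothing is missing from yours.
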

\begin{proof}
(i) By Gordan's lemma \cite[Lemma 2.9]{polytopes}, $W\cap \Z^n$ is a finitely generated monoid. As the monoid $\N^{n}_{0}$ is also finitely generated, the intersection $(W\cap \Z^n)\cap \N_{0}^{n}=W\cap \N_{0}^{n}$ is, by \cite[Corollary 2.11]{polytopes}, again a finitely generated monoid.

(ii) The interior of a simplex in $\R^n$ consists of all proper convex combinations of its corner points. Thus our claim is true if $a_1,\dots,a_{k}$ are affinely independent. In a general case assume, without loss of generality, that for some $m\leq k$ the points $a_1,\dots,a_m$ are affinely independent and $a_{m+1},\dots,a_k\in\conv(a_1,\dots,a_m)$.  If $y\in C$ is a proper convex combination of $a_1,\dots,a_k$, then, by expressing the points $a_{m+1},\dots,a_k$ as convex combinations of $a_1,\dots,a_m$, it follows that $y$ is also a proper convex combination of $a_1,\dots,a_m$. Thus $y\in\ri(\conv(a_1,\dots,a_m))=\ri(C)$.
\end{proof}

\begin{lemma}\label{convex2}
  Let $\mathcal{C}\sub\N^n_0$ be a saturated monoid. Let $\mathbf{K}$ be the decomposition of the cone $K=\conv(\mathcal{C})\sub\R^n$ into relatively open faces as in Proposition $\ref{cone_decomp}$. For a relatively open face  $A\in\mathbf{K}$ of $K$ the following holds:
  \begin{enumerate}
\renewcommand{\theenumi}{(\roman{enumi})}
\renewcommand{\labelenumi}{\theenumi}
\item $(A\cap\mathcal{C})\cup\{0\}$ is a saturated submonoid of $\mathcal{C}$.
\item\label{oo} $A\cap\mathcal{C}=A\cap\N_{0}^n$.
\item\label{o} $\gen{A\cap\mathcal{C}}=\gen{A}$.
\end{enumerate} 
\end{lemma}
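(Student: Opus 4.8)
The plan is to prove the three items essentially independently. For (i) nothing beyond Proposition~\ref{cone_decomp} is needed. If $a,b\in A\cap\mathcal{C}$ then $a+b\in\mathcal{C}$ because $\mathcal{C}$ is a monoid, and $a+b\neq 0$; moreover $\tfrac12(a+b)\in A\cup\{0\}$ by convexity of the cone $A\cup\{0\}$, so $a+b=2\cdot\tfrac12(a+b)\in A\cup\{0\}$ by the scaling property, i.e.\ $a+b\in A$. Thus $(A\cap\mathcal{C})\cup\{0\}$ is a submonoid of $\mathcal{C}$; it is saturated because if $0\neq k\alpha\in(A\cap\mathcal{C})\cup\{0\}$ then $k\alpha\in\mathcal{C}$ forces $\alpha\in\mathcal{C}$ by saturatedness of $\mathcal{C}$, and $\alpha=\tfrac1k(k\alpha)\in(A\cup\{0\})\setminus\{0\}=A$.

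For (ii) the inclusion $A\cap\mathcal{C}\subseteq A\cap\N_{0}^n$ is trivial, and the reverse follows from the stronger assertion $K\cap\N_{0}^n\subseteq\mathcal{C}$ (recall $A\subseteq K$). Since $0\in\mathcal{C}$ and $\mathcal{C}$ is closed under addition, $K=\conv(\mathcal{C})$ is the convex cone generated by $\mathcal{C}$, so for $\alpha\in K\cap\N_{0}^n$ Carath\'eodory's theorem for cones gives $\alpha=\sum_{i\in I}\mu_i c_i$ with $\mu_i>0$, $c_i\in\mathcal{C}$ and $\{c_i\mid i\in I\}$ linearly independent. As $\alpha$ and the $c_i$ have integer coordinates and the $c_i$ are independent, the $\mu_i$ solve a linear system with rational data, hence $\mu_i\in\Q$. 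Multiplying by a common denominator $k\in\N$ gives $k\alpha=\sum_{i\in I}(k\mu_i)c_i\in\mathcal{C}$, and saturatedness of $\mathcal{C}$ yields $\alpha\in\mathcal{C}$.

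For (iii) only $\gen{A}\subseteq\gen{A\cap\mathcal{C}}$ needs work, and the crucial tool is a \emph{face property} of the decomposition $\mathbf{K}$: for $B\in\mathbf{K}$ and $x\in B$, any expression $x=\sum_i\mu_i d_i$ with $\mu_i>0$, $d_i\in K$ has all $d_i\in\gen{B}$. Granting this, apply it with $B=A$ to representations of points of $A$ arising from $K=\conv(\mathcal{C})$; collecting such representations for a finite family of points of $A$ spanning $\gen{A}$ shows that $\mathcal{C}\cap\gen{A}$ spans $\gen{A}$. If $c_1,\dots,c_d\in\mathcal{C}\cap\gen{A}$ is then a basis of $\gen{A}$, the element $s:=c_1+\dots+c_d\in\mathcal{C}$ is a positive combination of the independent vectors $c_j$, hence lies in the relative interior of the full-dimensional cone $\sum_j\R_{\geq0}c_j\subseteq\gen{A}\cap K$, so $s\in\ri(\gen{A}\cap K)=A$ and $A\cap\mathcal{C}\neq\emptyset$. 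Finally, fix $a\in A\cap\mathcal{C}$; for any $c\in\mathcal{C}\cap\gen{A}$ we have $c+a\in\mathcal{C}$ and, since $a\in A=\ri(\gen{A}\cap K)$, $c\in\gen{A}\cap K$ and $A\cup\{0\}$ is a cone, also $c+a\in A$; hence $c=(c+a)-a\in\gen{A\cap\mathcal{C}}$. Therefore $\gen{A\cap\mathcal{C}}\supseteq\gen{\mathcal{C}\cap\gen{A}}=\gen{A}$.

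It remains to establish the face property, which I expect to be the main obstacle. I would prove it from the explicit construction in the proof of Proposition~\ref{cone_decomp}: write $B=\ri(W_x\cap K)$ with $W_x$ of maximal dimension, so that $\gen{B}=W_x$. If some $d_i\notin W_x$, set $e=d_i-x\notin W_x$; writing $x$ as a positive combination of $d_i$ and of a convex combination $d'$ of the remaining $d_j$, one checks that $x+te\in K$ for all small $t$ of either sign (it remains a convex combination of $d_i$ and $d'$ with positive weights), while $x+w\in K$ for all small $w\in W_x$ because $x\in\ri(W_x\cap K)$; taking midpoints $x+w+te=\tfrac12(x+2w)+\tfrac12(x+2te)$ shows that a relative neighbourhood of $x$ in $W_x+\R e$ lies in $K$. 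Hence $x\in\ri((W_x+\R e)\cap K)$ with $\dim(W_x+\R e)>\dim W_x$, contradicting the maximality of $W_x$. The auxiliary facts used above — that $\gen{B}$ really equals $W_x$ and that $\gen{A}\cap K$ is full-dimensional in $\gen{A}$ — also follow from this maximality and I would dispatch them at the start of the argument.
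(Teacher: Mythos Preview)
Your proof is correct and, at heart, follows the same route as the paper. Parts (i) and (ii) match the paper's arguments almost verbatim; for (ii) the paper simply cites the identity $K\cap\N_0^n=\mathcal{C}$ from \cite[Proposition 2.22]{polytopes}, whereas you reprove it via Carath\'eodory and a rationality argument, which is fine.

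For (iii) both proofs rest on the same \emph{face property}: if $x\in A$ is a positive combination of points $d_i\in K$, then all $d_i\in\gen{A}$. The paper derives this contradiction in the course of its dimension count, then laboriously constructs a set $F'$ of $k+1$ affinely independent rational points and passes to an integer multiple $\ell\cdot F'\subseteq A\cap\mathcal{C}$. Your endgame is cleaner: once $\mathcal{C}\cap\gen{A}$ spans $\gen{A}$, picking a basis $c_1,\dots,c_d$ there and observing that $s=c_1+\cdots+c_d\in\ri\big(\sum_j\R_{\ge 0}c_j\big)\subseteq\ri(\gen{A}\cap K)=A$ already lands you in $A\cap\mathcal{C}$; the translation trick $c\mapsto c+s$ then pushes all of $\mathcal{C}\cap\gen{A}$ into $\gen{A\cap\mathcal{C}}$. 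This avoids the paper's $F,F',\ell$ bookkeeping entirely.

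One minor wrinkle: the identity $\gen{B}=W_x$ you announce can fail (take $K=\R_{\ge 0}\times\{0\}\subseteq\R^2$ and $x=(1,0)$: then $W_x=\R^2$ but $\gen{B}=\R\times\{0\}$). What does hold, and what your argument actually needs, is $W_x\cap K=\gen{B}\cap K$; indeed $\gen{B}$ is the affine hull of $W_x\cap K$, so $W_x\cap K\subseteq\gen{B}\subseteq W_x$. Hence for $d_i\in K$ the hypothesis $d_i\notin\gen{B}$ forces $d_i\notin W_x$, and your maximality contradiction goes through unchanged. With that small correction the proof is complete.
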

\begin{proof}
 (i) Obviously, the intersection of saturated monoids $(A\cup\{0\})\cap\mathcal{C}=(A\cap\mathcal{C})\cup\{0\}$ is a saturated monoid.

 (ii) By \cite[Proposition 2.22]{polytopes}, it holds that $\N_0^{n}\cap\conv(\mathcal{C})=\mathcal{C}$. Thus we have  $\mathcal{C}\cap A=\N_0^{n}\cap\conv(\mathcal{C})\cap A=\N_0^{n}\cap A$.

 (iii) The construction of the decomposition $\mathbf{K}$ in the proof of Proposition \ref{cone_decomp} implies that for the vector space $W=\gen{A}\sub\R^{n}$ there holds that $A=\ri(W\cap K)$. Further, by (i), the monoid $(A\cap\mathcal{C})\cup\{0\}$ is saturated. 
 
 To show the equality $\gen{A\cap\mathcal{C}}=\gen{A}$, it is therefore enough to prove that $\dim (A\cap\mathcal{C})=\dim (A)$. Let $k=\dim(A)$. For an element $y\in A=\ri(A)$ there are affinely independent points $y_1,\dots,y_{k+1}\in A\sub\R^n$ such that $y$ is their proper convex combination. Since $A\sub\conv(\mathcal{C})$, the point $y_1$ can be expressed by the least number $m\in\N$ of the points $z_1,\dots,z_m\in\mathcal{C}$. In particular, $y_1$ is a proper convex combination of these points. 
 
 Assume, for the contrary, that $\{z_1,\dots,z_m\}\not\sub W$. Then for the convex hull $B=\conv(z_1,\dots,z_m,y_2,\dots,y_{k+1})\sub K$ it holds that $k=\dim\big(\conv(y_1,\dots,y_{k+1})\big)\leq\dim(B)$ and $B\not\sub W$. Since $\dim W=k$, it follows that $k<\dim(B)$ and, by Lemma \ref{fin_gen}(ii), we obtain that $y\in\ri(B)$. This is a contradiction with the construction of open faces of $K$ (see the proof of Proposition \ref{cone_decomp}). 
 
 Hence it holds that $\{z_1,\dots,z_m\}\sub W$ and, by an analogous argument for the rest of $y_i$'s, there is a finite subset $F\sub\mathcal{C}$ such that $\conv(y_1,\dots,y_{k+1})\sub \conv(F)\sub W$. Thus, it holds that $\dim \big(\conv(F)\big)=k$. Now, we can easily find a set $F'\sub \R^n$ of $k+1$ points, that are affinely independent and all of them are proper rational convex combinations of points from $F$. In particular, by Lemma \ref{fin_gen}(ii), we obtain that $F'\sub\ri\big(\conv(F)\big)$. By the construction of $F'$, it follows that  $\ell\cdot F'\sub \mathcal{C}$ for some $\ell\in\N$. Further, since $\conv(F)\sub W\cap K$ and $\dim\big(\conv(F)\big)=\dim(A)=\dim(W)=\dim(W\cap K)$, it holds that $\ri\big(\conv(F)\big)\sub\ri(W\cap K)=A$.
 
 The set $M=\ell\cdot F'$ has therefore the following properties
\begin{itemize}
 \item $M\sub \mathcal{C}$,
 \item $M$ consists of $k+1$ points that are affinely independent, i.e., $\dim (M)=k$,
 \item $M=\ell\cdot F'\sub\ell\cdot \ri\big(\conv(F)\big) \sub \ell\cdot A\sub A$. 
\end{itemize}
Hence $M\sub A\cap\mathcal{C}$ and $\dim(M)=k$. Finally, we obtain that $\dim(A\cap\mathcal{C})=k=\dim(A)$ and, therefore, $\gen{A\cap\mathcal{C}}=\gen{A}=W$.
\end{proof}

\begin{definition}\label{canonical_decomp}
  Let $\mathcal{C}\sub\N^n_0$ be a saturated monoid and $\mathbf{K}$ be the decomposition of the cone $K=\conv(\mathcal{C})\sub\R^n$ into relatively open faces (see Proposition \ref{cone_decomp}). The system    $$\mathbf{D}(\mathcal{C})=\set{(A\cap\mathcal{C})\cup\{0\}}{A\in\mathbf{K}}$$ of saturated submonoids of $\mathcal{C}$ (see Lemma \ref{convex2}) will be called the \emph{canonical decomposition of the monoid} $\mathcal{C}$.
\end{definition}

\begin{remark}
A subsemigroup $B$ of a commutative semigroup $S$ is called a \emph{face} of $S$ if the conditions $a,b\in S$ and $ab\in B$ imply that $a,b\in B$ \cite[Chapter II, Section 6]{grillet}. 
Let us note that a monoid from $\mathbf{D}(\mathcal{C})$ does not need to be a face (submonoid) as it may miss some of its possible edge rays.
\end{remark}

\begin{proposition}\label{monoid_decomp}
  Let $\mathcal{C}\sub\N^n_0$ be a saturated monoid and $\mathbf{D}(\mathcal{C})$ be its canonical  decomposition. Then:
\begin{enumerate}
\renewcommand{\theenumi}{(\roman{enumi})}
\renewcommand{\labelenumi}{\theenumi}
\item $\mathcal{C}=\bigcup_{\mathcal D\in\mathbf{D}(\mathcal{C})} \mathcal D$,
\item $\mathcal D'\cap\mathcal D''=\{0\}$ for all $\mathcal{D}',\mathcal{D}''\in\mathbf{D}(\mathcal{C})$ such that $\mathcal D'\neq \mathcal D''$.
\end{enumerate} 
Further, for each $\mathcal{D}\in\mathbf{D}(\mathcal{C})$ the following holds:
\begin{enumerate}
\renewcommand{\theenumi}{(\roman{enumi})}
\renewcommand{\labelenumi}{\theenumi}
 \item\label{i} The monoid $\widetilde{\mathcal{D}}=\gen{\mathcal{D}}\cap\N_{0}^{n}$ is finitely generated.
 \item\label{ii} If $\dim(\mathcal{D})=\dim(\mathcal{C})$, then $\mathcal{C}\sub\widetilde{\mathcal{D}}$.
 \item\label{iii}   For all $\alpha\in \mathcal{D}\setminus\{0\}$ and $\beta\in \mathcal{C}\setminus \widetilde{\mathcal{D}}$ there is  $\mathcal{E}\in\mathbf{D}(\mathcal{C})$ such that $\dim(\mathcal{E})>\dim(\mathcal{D})$  and $\alpha+\beta\in \mathcal{E}$. 
 \item\label{iv}  For all $\alpha\in \mathcal{D}\setminus\{0\}$ and $\gamma\in \widetilde{\mathcal{D}}$ there is $k\in\N$ such that  $k\alpha+\gamma\in \mathcal{D}$.
\end{enumerate} 
\end{proposition}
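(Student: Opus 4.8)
The plan is to transport everything through the correspondence $\mathcal{D}=(A\cap\mathcal{C})\cup\{0\}\leftrightarrow A$ between a member of $\mathbf{D}(\mathcal{C})$ and the relatively open face $A\in\mathbf{K}$ defining it. Lemma~\ref{convex2} together with the construction of the faces in Proposition~\ref{cone_decomp} already records the identities I would isolate first: $A\cap\mathcal{C}=A\cap\N_{0}^{n}$, the space $W:=\gen{A}$ equals $\gen{A\cap\mathcal{C}}=\gen{\mathcal{D}}$ (so $\widetilde{\mathcal{D}}=W\cap\N_{0}^{n}$ and $\dim(\mathcal{D})=\dim(A)=\dim W$), $\mathcal{D}\setminus\{0\}\sub A$, and $A=\ri(W\cap K)$. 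The two top-level assertions are then immediate: since $\mathbf{K}$ partitions $K\supseteq\mathcal{C}$ and $0\in\mathcal{C}$, we get $\mathcal{C}=\bigcup_{A\in\mathbf{K}}(A\cap\mathcal{C})=\bigcup_{\mathcal{D}\in\mathbf{D}(\mathcal{C})}\mathcal{D}$, while disjointness of distinct faces forces $\mathcal{D}'\cap\mathcal{D}''=\{0\}$ whenever $\mathcal{D}'\neq\mathcal{D}''$.

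For part \ref{i} I would note that $W=\gen{A\cap\mathcal{C}}$ is spanned by integer vectors, hence has a basis in $\Q^{n}$, so Lemma~\ref{fin_gen}(i) gives that $\widetilde{\mathcal{D}}=W\cap\N_{0}^{n}$ is finitely generated. For part \ref{ii}, if $\dim(\mathcal{D})=\dim(\mathcal{C})$ then the vector spaces $\gen{\mathcal{D}}\sub\gen{\mathcal{C}}$ have equal dimension, hence coincide, so $\mathcal{C}\sub\gen{\mathcal{C}}\cap\N_{0}^{n}=\gen{\mathcal{D}}\cap\N_{0}^{n}=\widetilde{\mathcal{D}}$.

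For part \ref{iii}, take $\alpha\in\mathcal{D}\setminus\{0\}\sub A$ and $\beta\in\mathcal{C}\setminus\widetilde{\mathcal{D}}$. Since $\beta\in\N_{0}^{n}$ but $\beta\notin W\cap\N_{0}^{n}$ we have $\beta\in K\setminus\gen{A}$ and in particular $\beta\neq\alpha$; the ``moreover'' clause of Proposition~\ref{cone_decomp} applied with $x=\alpha$ and $y=\beta$ then yields $B\in\mathbf{K}$ with $\dim(B)>\dim(A)$ and $\ri(\conv(\{\alpha,\beta\}))\sub B$, so in particular the midpoint $\tfrac{1}{2}(\alpha+\beta)$ lies in $B$. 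As $B\cup\{0\}$ is a cone, $\alpha+\beta=2\cdot\tfrac{1}{2}(\alpha+\beta)\in B\cup\{0\}$, and since $\alpha+\beta\neq 0$ and $\alpha+\beta\in\mathcal{C}$ we conclude $\alpha+\beta\in B\cap\mathcal{C}$. Then $\mathcal{E}=(B\cap\mathcal{C})\cup\{0\}\in\mathbf{D}(\mathcal{C})$ works, with $\dim(\mathcal{E})=\dim(B)>\dim(A)=\dim(\mathcal{D})$ by Lemma~\ref{convex2}(iii).

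For part \ref{iv}, take $\alpha\in\mathcal{D}\setminus\{0\}\sub A$ and $\gamma\in\widetilde{\mathcal{D}}\sub W$, so that $k\alpha+\gamma\in W\cap\N_{0}^{n}$ for every $k\in\N$. Here $W\cap K$ is a cone whose linear span is $W$ (it contains $A$), hence its affine hull is $W$ and $A=\ri(W\cap K)$ is open in $W$; since $\alpha+\tfrac{1}{k}\gamma\to\alpha\in A$ in $W$, these points lie in $A$ for all large $k$, and multiplying by $k$ and using $k\cdot\ri(W\cap K)=\ri(W\cap K)$ for the cone $W\cap K$ gives $k\alpha+\gamma\in A$; combined with $k\alpha+\gamma\in\N_{0}^{n}$ this puts it in $A\cap\N_{0}^{n}=A\cap\mathcal{C}\sub\mathcal{D}$. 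The only point needing genuine (if small) care is the reduction of each part to facts about the face $A$ — in particular verifying $A=\ri(W\cap K)$ with $W=\gen{\mathcal{D}}$, which rests on Lemma~\ref{convex2}(iii) — together with the topological observation used in \ref{iv} that $\ri(W\cap K)$ is open in $W$ and equals $k\cdot\ri(W\cap K)$ for every $k\in\N$; once these are in place the arguments are formal.
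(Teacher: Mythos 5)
Your proposal is correct and follows essentially the same route as the paper: parts (i) and (ii) are argued identically, your part (iii) just fills in the details the paper leaves as "follows immediately from Proposition \ref{cone_decomp}" (midpoint in $\ri(\conv(\{\alpha,\beta\}))\sub B$, then scale by $2$ using that $B\cup\{0\}$ is a cone), and your part (iv) replaces the paper's explicit normalized-dot-product limit by the equivalent observation that $A=\ri(W\cap K)$ is open in $W$ and invariant under positive scaling. No gaps.
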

\begin{proof}
For every $\mathcal{D}\in \mathbf{D}(\mathcal{C})$ there is a relatively open face $A\in\mathbf{K}$ of the cone $K=\conv(\mathcal{C})\sub\R^n$ such that $\mathcal{D}=(A\cap\mathcal{C})\cup\{0\}$.

 (i) Clearly, the vector space $\gen{\mathcal{D}}$ has a basis that consists of vectors from $\Q^{n}$. Hence, by Lemma \ref{fin_gen}(i), the monoid  $\widetilde{\mathcal{D}}$ is finitely generated.
 
 (ii) Since $\dim(\mathcal{D})=\dim(\mathcal{C})$ and $\mathcal{D}\sub\mathcal{C}$, we have $\gen{\mathcal{D}}=\gen{\mathcal{C}}$. Hence $\mathcal{C}\sub\gen{\mathcal{C}}\cap\N_{0}^n=\gen{\mathcal{D}}\cap\N_{0}^n=\widetilde{\mathcal{D}}$.

 (iii) Let $\alpha\in \mathcal{D}\setminus\{0\}$ and $\beta\in \mathcal{C}\setminus \widetilde{\mathcal{D}}$. By Lemma \ref{convex2}\ref{o}, we see that $\widetilde{\mathcal{D}}=\N_0^{n}\cap\gen{A}$ and $\beta\in K\setminus\gen{A}$. The rest follows immediately from Proposition \ref{cone_decomp} and from the fact that $\mathcal{C}$ is saturated.
 
 (iv) By Lemma \ref{convex2}\ref{o}, it holds that $\gen{A}=\gen{\mathcal{D}}$. The vector $\alpha\in \mathcal{D}\setminus\{0\}\sub A \cap \mathcal{C}\sub A=\ri(A)$  is an inner point of the convex set $A$. Hence every non-zero vector $u\in\gen{A}=\gen{\mathcal{D}}$ with a small enough angle between $u$ and $\alpha$ has to be contained in $A$. In particular, there is $\varepsilon>0$ such that for every $u\in\gen{\mathcal{D}}\setminus\{0\}$ the inequality  $\frac{|\alpha\cdot u|}{\|\alpha\|\cdot\|u\|}>1-\varepsilon$ implies that $u\in A$. 
 
 Further, for $\gamma\in\widetilde{D}=\gen{D}\cap\N_{0}^n$ we have
 $$\lim\limits_{k\to\infty}\frac{|\alpha\cdot (k\alpha+\gamma)|}{\|\alpha\|\cdot\|k\alpha+\gamma\|}=
 \lim\limits_{k\to\infty}\frac{|\alpha\cdot\alpha+\frac{\alpha\cdot\gamma}{k}|}{\|\alpha\|\cdot\|\alpha+\frac{\gamma}{k}\|}=1$$
  
 Therefore, by Lemma \ref{convex2}\ref{oo}, there is $k_0\in\N$ such that $k_0\alpha+\gamma\in A\cap\N_0^{n}=A\cap\mathcal{C}\sub\mathcal{D}$.
\end{proof}

\section{Additively almost-divisible factors of the monoid ring $\Z[\mathcal{C}]$  are torsion}\label{section 2}

In this section we prove results for the case of commutative rings. First let us state a basic result.

\begin{theorem}\cite[2.5]{torsion_div}\label{divisible}
Let $R$ be a finitely generated commutative ring. Then an element $a\in R$ is additively almost-divisible if and only if $a$ is  torsion.
\end{theorem}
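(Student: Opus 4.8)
The plan is to prove the two implications of the equivalence separately: one direction is immediate from the Remark, while the substantial direction will rest on a generic-freeness argument.

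For the direction ``torsion $\Rightarrow$ almost-divisible'' I would argue as follows. In a ring the additive reduct $R(+)$ is a group, so every element is additively regular; hence by the Remark a torsion element $a$ lies in a finite subgroup $G$ of $R(+)$, and the Remark then yields that $a$ is strongly almost-divisible with respect to the infinite set of primes coprime to $|G|$, in particular almost-divisible. Explicitly, if $a$ has additive order $n$ and $p$ is a prime with $\gcd(p,n)=1$, pick $p'$ with $pp'\equiv 1\pmod n$; then $p\cdot(p'a)=a$, and infinitely many such $p$ exist.

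For ``almost-divisible $\Rightarrow$ torsion'' I would first reduce to a cleaner statement. By definition there is $k\in\N$ such that $b:=ka$ is strongly almost-divisible, i.e.\ $b=pc_p$ for some $c_p\in R$ and for every prime $p$ in some infinite set $P$; and since $a$ has finite additive order as soon as $b$ does, it suffices to prove that such a $b$ has finite additive order. Here I would invoke generic freeness: since $R$ is a finitely generated commutative ring, hence a finitely generated $\Z$-algebra, the generic freeness theorem for finitely generated algebras over a Noetherian domain provides a nonzero integer $N$ such that $R\otimes_\Z\Z[1/N]$ is a \emph{free} $\Z[1/N]$-module. Fixing an isomorphism $R\otimes_\Z\Z[1/N]\cong\bigoplus_{j\in J}\Z[1/N]$, the image $\bar b$ of $b$ has only finitely many nonzero coordinates $\bar b_j$. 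For each of the infinitely many primes $p\in P$ with $p\nmid N$, the relation $b=pc_p$ forces $p\mid\bar b_j$ in $\Z[1/N]$ for every $j$, i.e.\ the $p$-adic valuation satisfies $v_p(\bar b_j)\ge 1$ whenever $\bar b_j\ne 0$; as a fixed nonzero rational has vanishing $p$-adic valuation for all but finitely many $p$, the infinitude of $P$ forces every $\bar b_j=0$, so $\bar b=0$. Thus $b$ lies in the kernel of the localization map $R\to R\otimes_\Z\Z[1/N]$, which consists exactly of the elements annihilated by a power of $N$; hence $N^m b=0$ for some $m$, so $(N^m k)\,a=0$ and $a$ is torsion.

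The step I expect to be the main obstacle is the commutative-algebra input: producing the integer $N$ for which $R[1/N]$ is \emph{free} (not merely flat) over $\Z[1/N]$ — it is finite generation of $R$ that makes this possible, and freeness, rather than mere torsion-freeness over $\Z[1/N]$, is precisely what the coordinatewise valuation count requires. If one prefers to avoid citing generic freeness, an alternative route is to reduce to the torsion-free case by dividing out the torsion ideal of $R(+)$, embed $R$ into $R\otimes_\Z\Q$, use the Nullstellensatz for finitely generated $\Q$-algebras to locate a maximal ideal missing $b$ so that $b$ maps to a nonzero element $\beta$ of a number field $L$, and then derive a contradiction from $p^{[L:\Q]}\mid N_{L/\Q}(\beta)$ holding for infinitely many primes $p$; this works, but needs an additional argument to dispose of the case in which $b$ becomes nilpotent, which is why I would favour the generic-freeness proof.
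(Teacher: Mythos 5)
Your proposal is correct, but note that the paper does not prove this statement at all: it imports it by citation from \cite[2.5]{torsion_div}, so there is no in-paper argument to compare against. Your two directions are both sound. The easy direction is exactly the observation in the paper's Remark (in a ring every element is additively regular, so torsion elements lie in finite subgroups of $R(+)$ and are strongly almost-divisible with respect to the primes coprime to the order). For the substantial direction, the reduction to showing that a strongly almost-divisible $b=ka$ has finite additive order is right, and the generic-freeness argument works: Grothendieck's lemma gives a nonzero $N$ with $R[1/N]$ free over $\Z[1/N]$, the coordinatewise $p$-adic valuation count kills $\bar b$ because a fixed nonzero element of $\Z[1/N]$ has valuation $0$ at all but finitely many primes, and the kernel of $R\to R[1/N]$ is precisely the $N$-power torsion, yielding $N^mkা\,a=0$ --- wait, $N^mk\,a=0$ --- and hence torsion of $a$. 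One small point you should make explicit: the paper's rings need not be unital, so ``finitely generated commutative ring'' does not immediately mean ``finitely generated $\Z$-algebra''; this is repaired by passing to the unitalization $R^1=\Z\oplus R$ (a Noetherian finitely generated $\Z$-algebra of which $R$ is an ideal, hence a finitely generated $R^1$-module) and applying generic freeness to the module $R$ over $B=R^1$, $A=\Z$. Compared with the proof in the cited source, which proceeds through the internal structure of finitely generated rings, your route trades that structure theory for one clean appeal to generic freeness plus elementary valuation theory; it is arguably shorter and makes transparent exactly where finite generation enters, namely in producing the single denominator $N$ that makes $R[1/N]$ free rather than merely flat over $\Z[1/N]$.
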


For monoids $\mathcal{C},\mathcal{D}\sub\N^{n}_{0}(+)$ we define their sum as $\mathcal{C}+\mathcal{D}=\set{\alpha+\beta}{\alpha
\in\mathcal{C},\ \beta\in\mathcal{D}}$.

\begin{lemma}\label{decomp_lemma}
  Let $\mathcal{C}\sub\N_{0}^n$ be a saturated monoid  and  $\mathbf{D}(\mathcal{C})$ its canonical decomposition.
  Then for all $\mathcal{D}\in\mathbf{D}(\mathcal{C})$,  $\alpha\in\mathcal{D}\setminus\{0\}$ and  $f\in \Z[\mathcal{C}+\widetilde{\mathcal{D}}]$ there is $k\in\N$ such that $\x^{k\alpha}f\in \Z[\mathcal{C}]$.
\end{lemma}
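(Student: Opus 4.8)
The plan is to reduce the statement to a single monomial and then let Proposition~\ref{monoid_decomp}\ref{iv} do essentially all the work, since that statement is exactly designed to push the ``$\widetilde{\mathcal{D}}$-part'' of an exponent back into $\mathcal{D}$.

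First I would write $f=\sum_{i=1}^{m}c_i\,\x^{\delta_i}$ with $c_i\in\Z$ and $\delta_i\in\mathcal{C}+\widetilde{\mathcal{D}}$, and fix for each $i$ a decomposition $\delta_i=\beta_i+\gamma_i$ with $\beta_i\in\mathcal{C}$ and $\gamma_i\in\widetilde{\mathcal{D}}$. Since multiplication by $\x^{k\alpha}$ merely shifts every exponent by $k\alpha$, and since $\Z[\mathcal{C}]$ is precisely the $\Z$-span of the monomials $\x^{\delta}$ with $\delta\in\mathcal{C}$, it is enough to produce one $k\in\N$ with $k\alpha+\delta_i\in\mathcal{C}$ for every $i$; then $\x^{k\alpha}f=\sum_i c_i\,\x^{k\alpha+\delta_i}\in\Z[\mathcal{C}]$.

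For a fixed $i$ I would apply Proposition~\ref{monoid_decomp}\ref{iv} to $\alpha\in\mathcal{D}\setminus\{0\}$ and $\gamma_i\in\widetilde{\mathcal{D}}$, obtaining $k_i\in\N$ with $k_i\alpha+\gamma_i\in\mathcal{D}$. Then $k_i\alpha+\delta_i=\beta_i+(k_i\alpha+\gamma_i)$ is a sum of $\beta_i\in\mathcal{C}$ and an element of $\mathcal{D}\sub\mathcal{C}$, hence lies in the monoid $\mathcal{C}$. To make the shift uniform in $i$, I would use that $\mathcal{D}$ is a submonoid containing $\alpha$, so $j\alpha\in\mathcal{D}$ for all $j\in\N$ and therefore $k\alpha+\gamma_i=(k-k_i)\alpha+(k_i\alpha+\gamma_i)\in\mathcal{D}$ for every $k\geq k_i$. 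Consequently $k=\max\{k_1,\dots,k_m\}$ satisfies $k\alpha+\delta_i\in\mathcal{C}$ for all $i$ simultaneously, which finishes the argument.

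I do not expect a genuine obstacle here: the whole content is captured by Proposition~\ref{monoid_decomp}\ref{iv}, and the only point requiring a little care is the passage from the per-monomial shifts $k_i$ to the common $k$, which is immediate from $\mathcal{D}$ being a monoid. Note that parts \ref{ii} and \ref{iii} of Proposition~\ref{monoid_decomp} are not needed for this lemma; they will presumably be used later, when combining the pieces $\mathcal{D}\in\mathbf{D}(\mathcal{C})$ across different dimensions.
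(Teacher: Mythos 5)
Your proposal is correct and follows essentially the same route as the paper: expand $f$ into monomials $\x^{\beta_i+\gamma_i}$ with $\beta_i\in\mathcal{C}$, $\gamma_i\in\widetilde{\mathcal{D}}$, apply Proposition~\ref{monoid_decomp}(v) (the part on $k\alpha+\gamma\in\mathcal{D}$) to each $\gamma_i$, and absorb the result back into $\Z[\mathcal{C}]$. Your explicit justification that a common $k=\max\{k_1,\dots,k_m\}$ works (via $\mathcal{D}$ being a submonoid containing $\alpha$) is a detail the paper leaves implicit, and it is correct.
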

\begin{proof}
 The set $\mathcal{C}+\widetilde{\mathcal{D}}=\set{\beta+\gamma}{\beta\in\mathcal{C}, \gamma\in\widetilde{\mathcal{D}}}$ is a submonoid of $\N^{n}_{0}$. Hence for $f\in \Z[\mathcal{C}+\widetilde{\mathcal{D}}]$ there are $\beta_1,\dots,\beta_m\in\mathcal{C}$, $\gamma_1,\dots,\gamma_m\in\widetilde{\mathcal{D}}$  and $\ell_1,\dots,\ell_m\in\Z$ such that $$f=\sum_{i=1}^{m} \ell_{i}\x^{\gamma_i+\beta_i}.$$
 
By Proposition \ref{monoid_decomp}\ref{iv}, there is $k\in\N$ such that  $k\alpha+\gamma_i\in \mathcal{D}\sub\mathcal{C}$ for every $i=1,\dots,m$.
Therefore we have  that  $$\x^{k\alpha}f=\sum_{i=1}^{m} \ell_{i}(\x^{k\alpha+\gamma_i})\x^{\beta_i}\in \Z[\mathcal{C}].$$
\end{proof}

For a ring $R$, an ideal $I$ of $R$ and an element $a\in R$ we will denote by $[a]_{I}$ the element of the factor-ring $R/I$ that corresponds to $a$.

\begin{lemma}\label{nil-radical}
  Let $\mathcal{C}\sub\N_{0}^n$ be a saturated monoid  and  $\mathbf{D}(\mathcal{C})$ be its canonical decomposition. Let $\mathcal{D}\in\mathbf{D}(\mathcal{C})$ and let $J$ be an ideal in the ring 
  $\Z[\mathcal{C}+\widetilde{\mathcal{D}}]$.
  
  Further, let 
     \begin{enumerate}
  \item[$\bullet$] $N$ be an ideal in the ring  $S=\Z[\mathcal{C}+\widetilde{\mathcal{D}}]/J$ such that 
   for every  $\mathcal{E}\in \mathbf{D}(\mathcal{C})$ with $\dim(\mathcal{E})>\dim(\mathcal{D})$  and every $\delta\in \mathcal{E}\setminus\{0\}$ it holds that $[\x^{\delta}]_{J}\in N$,
  \item[$\bullet$] $\pi:S\to S/N$ be the natural ring epimorphism and
  \item[$\bullet$] $T$ be the subring of $S/N$ generated by the set $\set{\pi\big([\x^{\beta}]_{J}\big)}{\beta\in \widetilde{\mathcal{D}}}$.
 \end{enumerate}
  Then
    \begin{enumerate}
  \renewcommand{\theenumi}{(\roman{enumi})}
\renewcommand{\labelenumi}{\theenumi}
   \item\label{b1}
  the ring $T$ is finitely generated,
  \item\label{b2} for every $f\in \Z[\mathcal{C}]$ and  $\alpha\in\mathcal{D}\setminus\{0\}$ we have $\pi\big([\x^{\alpha}f]_{J}\big)\in T$.
    \end{enumerate}
\end{lemma}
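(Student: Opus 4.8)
The plan is to treat the two assertions separately; both reduce to bookkeeping with the canonical decomposition. For \ref{b1}, the key input is Proposition~\ref{monoid_decomp}\ref{i}: the monoid $\widetilde{\mathcal{D}}$ is finitely generated, say by $\delta_1,\dots,\delta_r$. I would then observe that for any $\beta=\sum_{i=1}^r c_i\delta_i\in\widetilde{\mathcal{D}}$ (with $c_i\in\N_0$) one has $[\x^\beta]_J=\prod_{i=1}^r[\x^{\delta_i}]_J^{c_i}$ in $S$, hence $\pi\big([\x^\beta]_J\big)=\prod_{i=1}^r\pi\big([\x^{\delta_i}]_J\big)^{c_i}$. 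Consequently the whole generating set $\set{\pi([\x^\beta]_J)}{\beta\in\widetilde{\mathcal{D}}}$ of $T$ lies in the subring generated by the finite set $\set{\pi([\x^{\delta_i}]_J)}{1\le i\le r}$, so $T$ is finitely generated.

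For \ref{b2}, fix $f\in\Z[\mathcal{C}]$ and $\alpha\in\mathcal{D}\setminus\{0\}$, and write $f=\sum_{j=1}^m k_j\x^{\gamma_j}$ with $k_j\in\Z$ and $\gamma_j\in\mathcal{C}$ (possible since $\mathcal{C}$ is a monoid). Since $\alpha\in\mathcal{D}\sub\mathcal{C}$, each $\alpha+\gamma_j$ lies in the monoid $\mathcal{C}$, so $\x^\alpha f=\sum_j k_j\x^{\alpha+\gamma_j}\in\Z[\mathcal{C}]\sub\Z[\mathcal{C}+\widetilde{\mathcal{D}}]$ and $\pi\big([\x^\alpha f]_J\big)=\sum_j k_j\,\pi\big([\x^{\alpha+\gamma_j}]_J\big)$; as $T$ is a subring, it suffices to show $\pi\big([\x^{\alpha+\gamma_j}]_J\big)\in T$ for each $j$. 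Here I would split on whether $\gamma_j\in\widetilde{\mathcal{D}}$. If $\gamma_j\in\widetilde{\mathcal{D}}=\gen{\mathcal{D}}\cap\N_0^n$, then $\alpha+\gamma_j\in\gen{\mathcal{D}}\cap\N_0^n=\widetilde{\mathcal{D}}$ as well, so $\pi\big([\x^{\alpha+\gamma_j}]_J\big)$ is literally one of the defining generators of $T$. If $\gamma_j\notin\widetilde{\mathcal{D}}$, i.e.\ $\gamma_j\in\mathcal{C}\setminus\widetilde{\mathcal{D}}$, then Proposition~\ref{monoid_decomp}\ref{iii} (applied to $\alpha\in\mathcal{D}\setminus\{0\}$ and $\beta=\gamma_j$) yields some $\mathcal{E}\in\mathbf{D}(\mathcal{C})$ with $\dim(\mathcal{E})>\dim(\mathcal{D})$ and $\alpha+\gamma_j\in\mathcal{E}$; since $\alpha\ne 0$ and all vectors lie in $\N_0^n$ we get $\alpha+\gamma_j\ne 0$, hence $\alpha+\gamma_j\in\mathcal{E}\setminus\{0\}$, and the hypothesis on $N$ gives $[\x^{\alpha+\gamma_j}]_J\in N$, so $\pi\big([\x^{\alpha+\gamma_j}]_J\big)=0\in T$. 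In both cases $\pi\big([\x^{\alpha+\gamma_j}]_J\big)\in T$, and summing over $j$ finishes \ref{b2}.

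The argument is essentially a sequence of routine verifications, so there is no single hard step; the point requiring care is purely the bookkeeping — checking that every exponent vector produced ($\alpha+\gamma_j$, and the $\delta_i$) actually lies in $\mathcal{C}+\widetilde{\mathcal{D}}$ so that the classes $[\,\cdot\,]_J$ and their $\pi$-images are well defined, and recognising that the dichotomy ``$\gamma_j\in\widetilde{\mathcal{D}}$ versus $\gamma_j\in\mathcal{C}\setminus\widetilde{\mathcal{D}}$'' is exactly what is needed: multiplying by $\x^\alpha$ with $\alpha$ in the relative interior of the face either keeps the exponent inside $\gen{\mathcal{D}}$ (landing in a generator of $T$) or, via Proposition~\ref{monoid_decomp}\ref{iii}, pushes it into a strictly higher-dimensional face of the decomposition (landing in $N$, hence annihilated by $\pi$).
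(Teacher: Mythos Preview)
Your proof is correct and follows essentially the same approach as the paper's own proof: part~(i) via Proposition~\ref{monoid_decomp}\ref{i}, and part~(ii) by reducing to monomials and splitting on $\gamma_j\in\widetilde{\mathcal{D}}$ versus $\gamma_j\in\mathcal{C}\setminus\widetilde{\mathcal{D}}$, invoking Proposition~\ref{monoid_decomp}\ref{iii} in the latter case. Your write-up is in fact slightly more explicit than the paper's (you spell out the monoid-to-ring finite generation in~(i) and check $\alpha+\gamma_j\ne 0$ in~(ii)), but the substance is identical.
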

\begin{proof}
(i) By Proposition \ref{monoid_decomp}\ref{i}, the monoid $\widetilde{\mathcal{D}}$ is finitely generated. Therefore the ring $T$ is finitely generated, too.

(ii) Obviously, it is enough to prove the statement for the polynomials of the form $f=\x^{\gamma}$, where $\gamma\in\mathcal{C}$. If $\gamma\in \mathcal{C}\cap\widetilde{\mathcal{D}}$ then we have $\alpha+\gamma\in \widetilde{\mathcal{D}}$ and therefore we obtain that  $\pi\big([\x^{\alpha}f]_{J}\big)=\pi\big([\x^{\alpha+\gamma}]_{J}\big)\in T$.

On the other hand, if $\gamma\in\mathcal{C}\setminus\widetilde{\mathcal{D}}$ then, by Proposition \ref{monoid_decomp}\ref{iii}, there is $\mathcal{E}\in\mathbf{D}(\mathcal{C})$ such that $\dim(\mathcal{E})>\dim(\mathcal{D})$  and $\alpha+\gamma\in \mathcal{E}$. Hence, by the assumption on the ideal $N$, it holds that $[\x^{\alpha+\gamma}]_{J}\in N$. Therefore we obtain that  $\pi\big([\x^{\alpha}f]_{J}\big)=\pi\big([\x^{\alpha+\gamma}]_{J}\big)=0\in T$ and this concludes the proof.
\end{proof}

For a commutative ring $R$ put $$\mathcal{N}_{\mathcal{T}}(R)=\set{a\in R}{(\exists\ \ell,q\in\N)\ q\cdot a^{\ell}=0}\ .$$
It is easy to check that $\mathcal{N}_{\mathcal{T}}(R)$ is an ideal of $R$.

\begin{lemma}\label{nilpotent}
 Let $\mathcal{C}\sub\N_0^n$ be a saturated monoid. Let $R=\Z[\mathcal{C}]/I$ be a homomorphic image of the ring $\Z[\mathcal{C}]$, where $I$ is an ideal in $\Z[\mathcal{C}]$. If $R$ is additively almost-divisible, then for every $\alpha\in\mathcal{C}$, $\alpha\neq 0$, it holds that $[\x^{\alpha}]_I\in \mathcal{N}_{\mathcal{T}}(R)$.
\end{lemma}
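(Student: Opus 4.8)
The plan is to induct on the codimension of the face $\mathcal{D}$ inside $K=\conv(\mathcal{C})$, i.e., on the quantity $\dim(\mathcal{C})-\dim(\mathcal{D})$, handling the faces of $\mathbf{D}(\mathcal{C})$ in decreasing order of dimension. Fix $\mathcal{D}\in\mathbf{D}(\mathcal{C})$ and $\alpha\in\mathcal{D}\setminus\{0\}$, and let $N$ be the ideal of $R$ generated by all $[\x^\delta]_I$ for $\delta$ ranging over the nonzero elements of faces $\mathcal{E}\in\mathbf{D}(\mathcal{C})$ with $\dim(\mathcal{E})>\dim(\mathcal{D})$. By the inductive hypothesis applied to those larger faces, each such generator $[\x^\delta]_I$ lies in $\mathcal{N}_{\mathcal{T}}(R)$; since $\mathcal{N}_{\mathcal{T}}(R)$ is an ideal of $R$, this will give $N\subseteq\mathcal{N}_{\mathcal{T}}(R)$. (The base case is the top-dimensional face, where the set of larger faces is empty and $N=0$.) It therefore suffices to show that $[\x^\alpha]_I$ becomes torsion-nilpotent modulo $N$, i.e. that $\pi([\x^\alpha]_I)\in\mathcal{N}_{\mathcal{T}}(R/N)$, where $\pi:R\to R/N$ is the natural epimorphism; composing with the nilpotency coming from $N$ then yields $[\x^\alpha]_I\in\mathcal{N}_{\mathcal{T}}(R)$.

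Next I would set this up so as to invoke Lemma \ref{nil-radical}. Take $J$ to be the ideal of $\Z[\mathcal{C}+\widetilde{\mathcal{D}}]$ generated by $I$ (note $\Z[\mathcal{C}]\subseteq\Z[\mathcal{C}+\widetilde{\mathcal{D}}]$ since $0\in\widetilde{\mathcal{D}}$), so that $S=\Z[\mathcal{C}+\widetilde{\mathcal{D}}]/J$ receives a natural map from $R=\Z[\mathcal{C}]/I$, and pull the ideal $N$ back to an ideal of $S$ with the property required in the hypothesis of Lemma \ref{nil-radical} (the generators $[\x^\delta]_J$ for $\delta$ in larger faces lie in it by construction). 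Lemma \ref{nil-radical}\ref{b1} gives a finitely generated subring $T$ of $S/N$, and \ref{b2} gives $\pi([\x^\alpha f]_J)\in T$ for every $f\in\Z[\mathcal{C}]$ and in particular $\pi([\x^\alpha]_J)\in T$ (take $f=1$).

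The final step is to transfer almost-divisibility from $R$ to $T$ and then apply Theorem \ref{divisible}. The element $a=\pi([\x^\alpha]_J)\in T$ is, via the chain of ring homomorphisms $R\to S\to S/N$, the image of the additively almost-divisible element $[\x^\alpha]_I\in R$; since ring homomorphisms preserve additive almost-divisibility, $a$ is additively almost-divisible in $S/N$. The subtle point — and the place I expect the main obstacle — is that almost-divisibility of $a$ must hold \emph{inside} the finitely generated ring $T$, not merely in the ambient ring $S/N$: one needs the witnesses $c$ with $pc=a$ (or $pc=ka$) to lie in $T$. This is exactly what Lemma \ref{nil-radical}\ref{b2} is for: writing $a=pc$ with $c\in S/N$, one clears the prime $p$ using that $c$ is a $\Z$-combination of monomials, and the product $\x^\alpha$ times any such monomial combination lands back in $T$ by \ref{b2} — so a suitable multiple of $a$ has its $p$-th roots available already in $T$. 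Granting this, $a$ is additively almost-divisible in the finitely generated commutative ring $T$, so by Theorem \ref{divisible} $a$ is torsion in $T$, hence $a^{\ell}$ generates a finite additive group, forcing $q\cdot a^{\ell}=0$ for some $q\in\N$; that is, $a\in\mathcal{N}_{\mathcal{T}}(S/N)$. Pulling back along $\Z[\mathcal{C}]\to S\to S/N$ gives $\pi([\x^\alpha]_I)\in\mathcal{N}_{\mathcal{T}}(R/N)$, and combining with $N\subseteq\mathcal{N}_{\mathcal{T}}(R)$ completes the induction.
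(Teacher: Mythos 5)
Your plan follows the paper's proof quite closely: the same downward induction on $\dim(\mathcal{D})$, the same auxiliary ring $S=\Z[\mathcal{C}+\widetilde{\mathcal{D}}]/J$ with an ideal $N$ absorbing the higher-dimensional faces, the same use of Lemma \ref{nil-radical} to land in a finitely generated subring $T$, and the same trick of multiplying the almost-divisibility witnesses by $\x^{\alpha}$ so that they fall into $T$ via Lemma \ref{nil-radical}\ref{b2} before invoking Theorem \ref{divisible}. (The paper takes $N=\mathcal{N}_{\mathcal{T}}(S)$ rather than the ideal generated by the higher-face monomials, but that is an inessential variation; your observation that $q\pi(a)^{\ell}=0$ modulo $N\sub\mathcal{N}_{\mathcal{T}}(R)$ forces $a\in\mathcal{N}_{\mathcal{T}}(R)$ is correct.)

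There is, however, one genuine gap: the concluding step ``pulling back along $\Z[\mathcal{C}]\to S\to S/N$ gives $\pi([\x^{\alpha}]_I)\in\mathcal{N}_{\mathcal{T}}(R/N)$.'' Torsion-nilpotency does not pull back along an arbitrary ring homomorphism, and the map $R=\Z[\mathcal{C}]/I\to S=\Z[\mathcal{C}+\widetilde{\mathcal{D}}]/\big(\Z[\mathcal{C}+\widetilde{\mathcal{D}}]\cdot I\big)$ is in general not injective, so knowing that the image of $[\x^{\alpha}]_I$ lies in $\mathcal{N}_{\mathcal{T}}(S/N)$ says nothing a priori about $[\x^{\alpha}]_I$ itself. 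What you actually obtain at that point is an identity $q''\x^{\ell\alpha}=\sum_i e_i g_i$ (plus terms supported on higher-dimensional faces, in your variant) with the coefficients $e_i$ lying in the \emph{larger} ring $\Z[\mathcal{C}+\widetilde{\mathcal{D}}]$ and $g_i\in I$; to conclude anything in $R$ you must first bring this identity back into $\Z[\mathcal{C}]$. This is exactly what Lemma \ref{decomp_lemma} is for: one multiplies by $\x^{k\alpha}$ with $k$ large enough that $\x^{k\alpha}e_i\in\Z[\mathcal{C}]$, obtaining $q''\x^{(k+\ell)\alpha}\in I$ plus higher-face contributions, whence $[\x^{\alpha}]_I\in\mathcal{N}_{\mathcal{T}}(R)$. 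Your proposal never invokes Lemma \ref{decomp_lemma}, and the same issue already occurs in your base case (where $S=\Z[\widetilde{\mathcal{D}}]/J\neq R$). With that lemma inserted at the transfer step, the argument is complete and matches the paper's.
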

\begin{proof}
 Let $\mathbf{D}(\mathcal{C})$ be the canonical decomposition  of $\mathcal{C}$. Every non-zero element $\alpha\in\mathcal{C}$ belongs into precisely one monoid $\mathcal{D}\in\mathbf{D}(\mathcal{C})$. We prove our assertion by the downward induction on the dimension of monoids $\mathcal{D}$  in $\mathbf{D}(\mathcal{C})$, i.e., from the highest dimension $n_0$ appearing  in $\mathbf{D}(\mathcal{C})$ to the lowest one.
 
 Let  $\mathcal{D}\in\mathbf{D}(\mathcal{C})$ have the dimension $n_0$. By Proposition \ref{monoid_decomp}\ref{i} and \ref{ii},  the monoid $\widetilde{\mathcal{D}}$ is finitely generated and $\mathcal{C}\sub\widetilde{\mathcal{D}}$. It follows that for the ideal $J=\Z[\widetilde{\mathcal{D}}]\cdot I$ of $\Z[\widetilde{\mathcal{D}}]$ the ring $R'=
 \Z[\widetilde{\mathcal{D}}]/J$ is finitely generated. Consider the natural ring homomorphism $R\to R'$, such that $[f]_{I}\mapsto [f]_{J}$ for $f\in\Z[\mathcal{C}]$.  Since the unity $1_R=[1]_{I}$ is an   additively almost-divisible element in $R$, the corresponding unity $1_{R'}=[1]_J\in R'$ is also additively almost-divisible in $R'$. Thus, by Theorem \ref{divisible}, the element $1_{R'}$ must be torsion in $R'$ and, therefore, there are polynomials $f_1,\dots, f_{m}\in \Z[\widetilde{\mathcal{D}}]$ and $h_1,\dots, h_{m}\in I$  such that $q=\sum_{i=1}^{m}f_i h_i$ for some $q\in\N$.

 Now, pick $\alpha\in\mathcal{D}\setminus\{0\}$.
 By Lemma  \ref{decomp_lemma}, there is $k\in\N$ such that $\x^{k\alpha}f_i\in \Z[\mathcal{C}]$ for every $i=1,\dots,m$. Therefore $$q\x^{k\alpha}=\sum_{i=1}^{m}(\x^{k\alpha}f_i)h_i\in \Z[\mathcal{C}]\cdot I=I$$ and $[q\x^{k\alpha}]_{I}=0$ in $R$. In particular, $[\x^{\alpha}]_{I}\in\mathcal{N}_{\mathcal{T}}(R)$.
 
 To proceed by induction, assume that for a given $n_1\in\N$,  $n_1<n_0$ and for every monoid $\mathcal{E}\in\mathbf{D}(\mathcal{C})$ such that $n_1<\dim(\mathcal{E})$ and every  $\delta\in \mathcal{C}$, $\delta\neq 0$, is $[\x^{\delta}]_{I}\in\mathcal{N}_{\mathcal{T}}(R)$.
  
  Now, let $\mathcal{D}\in\mathbf{D}(\mathcal{C})$ be a monoid such that $\dim(\mathcal{D})=n_1$. We are going to show that  $[\x^{\alpha}]_{I}\in\mathcal{N}_{\mathcal{T}}(R)$ for every $\alpha\in\mathcal{D}$, $\alpha\neq 0$.

 First, consider the ring $S=\Z[\mathcal{C}+\widetilde{\mathcal{D}}]/J$, where  $J=\Z[\mathcal{C}+\widetilde{\mathcal{D}}]\cdot I$ is an ideal in $\Z[\mathcal{C}+\widetilde{\mathcal{D}}]$. Let $\pi:S\to S/\mathcal{N}_{\mathcal{T}}(S)$ be the natural ring epimorphism and let $T$ be the subring of $S/\mathcal{N}_{\mathcal{T}}(S)$  generated by the set  $\set{\pi\big([\x^{\beta}]_{J}\big)}{\beta\in\widetilde{\mathcal{D}}}$. 

 Now, let $\alpha\in \mathcal{D}\setminus\{0\}$. The element $[1]_{I}$ is additively almost-divisible in $R$ and therefore there is an infinite set $P$ of prime numbers and $r\in\N$ such that for every $m\in P$ there is $t_m\in \Z[\mathcal{C}]$ and it holds that $[r]_{I}=m\cdot [t_m]_{I}$. Hence $[r\x^{\alpha}]_{I}=m\cdot [\x^{\alpha}t_m]_{I}$ and, by using the natural ring homomorphism $R\to S$ with $[g]_{I}\mapsto[g]_{J}$ for $g\in\Z[\mathcal{C}]$, it follows that $[r\x^{\alpha}]_{J}=m\cdot [\x^{\alpha}t_m]_{J}$.
 
  Further, by the induction hypothesis and by Lemma \ref{nil-radical}\ref{b2}, we obtain that $\pi\big([\x^{\alpha}t_m]_{J}\big)\in T$ and the set of equalities $\pi\big([r\x^{\alpha}]_{J}\big)=m\cdot \pi\big([\x^{\alpha}t_m]_{J}\big)$, where $m\in P$, implies that the element $\pi\big([r\x^{\alpha}]_{J}\big)\in T$ is additively almost-divisible in $T$.

  Finally, by Lemma \ref{nil-radical}\ref{b1}, the ring $T$ is finitely generated and thus, by Proposition \ref{divisible},  it follows that $\pi\big([r\x^{\alpha}]_{J}\big)$ is torsion in $T\sub S/\mathcal{N}_{\mathcal{T}}(S)$. Hence, there is $s\in\N$ such that $0=s\cdot \pi\big([ r\x^{\alpha}]_{J}\big)=\pi\big([ sr\x^{\alpha}]_{J}\big)$ in $S/\mathcal{N}_{\mathcal{T}}(S)$, and, therefore we obtain that  $[sr\x^{\alpha}]_{J}\in \mathcal{N}_{\mathcal{T}}(S)$. By the definition of $\mathcal{N}_{\mathcal{T}}(S)$, there are  $\ell,q'\in\N$ such that  $[ q'(sr\x^{\alpha})^{\ell}]_{J}=0$ in $S$.   Hence there are polynomials $e_1,\dots, e_{p}\in \Z[\mathcal{C}+\widetilde{\mathcal{D}}]$ and polynomials $g_1,\dots, g_{p}\in I$ such that $q''\x^{\ell\alpha}=\sum_{i=1}^pe_i g_i$, where $q''=q'(sr)^{\ell}\in\N$.
By Lemma  \ref{decomp_lemma}, we obtain that there is $k\in\N$ such that $\x^{k\alpha}e_i\in \Z[\mathcal{C}]$ for every $i=1,\dots,p$. Therefore $$q''\x^{(k+\ell)\alpha}=\sum_{i=1}^p(\x^{k\alpha}e_i)g_i\in \Z[\mathcal{C}]\cdot I=I$$ and $[q''\x^{(k+\ell)\alpha}]_I=0$ in $R$. In particular, we have obtained that $[\x^{\alpha}]_I\in \mathcal{N}_{\mathcal{T}}(R)$.

This concludes our proof and, indeed, for every $0\neq\alpha\in\mathcal{C}$ we have obtained that $[\x^{\alpha}]_I\in \mathcal{N}_{\mathcal{T}}(R)$.
\end{proof}

\begin{remark}\label{integral}
Let us recall a basic property of integral extensions of rings. Let $A\leq B$ be an integral extension of commutative rings with the same unity $1_A=1_B=1$. If $I$ is an ideal of $A$ and  $1\in B\cdot I$ then $1\in I$.  
\end{remark}

\begin{theorem}\label{main_theorem_rings}
 Let $R$ be  a ring that is a factor of the monoid ring $\Z[\mathcal{C}]$, where $\mathcal{C}$ is a submonoid of $\N^{n}_{0}(+)$. If $R$ is additively almost-divisible then the ring $R$ is torsion.
\end{theorem}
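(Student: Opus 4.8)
The plan is to reduce to the saturated case covered by Lemma~\ref{nilpotent}, but via the \emph{normalization} of $\mathcal C$ rather than its closure $\widetilde{\mathcal C}$. So first I would introduce
$$\mathcal C^{*}=\set{\alpha\in\N_{0}^{n}}{k\alpha\in\mathcal C\text{ for some }k\in\N}.$$
A short check shows that $\mathcal C^{*}$ is a submonoid of $\N_{0}^{n}(+)$ with $\mathcal C\sub\mathcal C^{*}$, that $\mathcal C^{*}$ is saturated in the sense of Section~\ref{section 1}, and that $\Z[\mathcal C]\sub\Z[\mathcal C^{*}]$ is an \emph{integral} ring extension with common unity $1$: for $\alpha\in\mathcal C^{*}$ choose $k$ with $k\alpha\in\mathcal C$, so that $\x^{\alpha}$ satisfies the monic equation $t^{k}=\x^{k\alpha}\in\Z[\mathcal C][t]$. (This is where the closure $\widetilde{\mathcal C}=\gen{\mathcal C}\cap\N_{0}^{n}$ would be of no use, since $\Z[\mathcal C]\sub\Z[\widetilde{\mathcal C}]$ need not be integral — nor even flat.)

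Next, writing $R=\Z[\mathcal C]/I$, I would put $\overline I=\Z[\mathcal C^{*}]\cdot I$ and $\overline R=\Z[\mathcal C^{*}]/\overline I$. Since $I\sub\overline I$, there is a natural ring homomorphism $R\to\overline R$, $[f]_{I}\mapsto[f]_{\overline I}$, sending $1_{R}$ to $1_{\overline R}$; as additive almost-divisibility of an element is preserved by ring homomorphisms, $1_{\overline R}$ — and hence the ring $\overline R$, which has a unity — is additively almost-divisible. Now $\overline R$ is a factor of $\Z[\mathcal C^{*}]$ with $\mathcal C^{*}$ saturated, so Lemma~\ref{nilpotent} gives $[\x^{\alpha}]_{\overline I}\in\mathcal{N}_{\mathcal{T}}(\overline R)$ for every $\alpha\in\mathcal C^{*}\setminus\{0\}$. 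Consequently, in $\overline R/\mathcal{N}_{\mathcal{T}}(\overline R)$ the images of all generators $[\x^{\alpha}]_{\overline I}$ with $\alpha\neq 0$ vanish, so this ring is generated by $1$, i.e.\ it is a homomorphic image of $\Z$ whose identity is additively almost-divisible. As $1\in\Z$ is not additively almost-divisible (every $k\in\N$ has only finitely many prime divisors), $\overline R/\mathcal{N}_{\mathcal{T}}(\overline R)$ cannot be $\Z$; thus it is finite and $N_{0}\cdot 1_{\overline R}\in\mathcal{N}_{\mathcal{T}}(\overline R)$ for some $N_{0}\in\N$. By the definition of $\mathcal{N}_{\mathcal{T}}$ there are $q,\ell\in\N$ with $(qN_{0}^{\ell})\,1_{\overline R}=q\,(N_{0}1_{\overline R})^{\ell}=0$ in $\overline R$, i.e.\ $N:=qN_{0}^{\ell}\in\overline I=\Z[\mathcal C^{*}]\cdot I$.

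To finish, I would descend from $\overline I$ to $I$ by Remark~\ref{integral}. The ring $\Z[\mathcal C]$ is an integral domain (a subring of $\Z[x_{1},\dots,x_{n}]$) and $N\neq 0$, so localizing at the powers of $N$ is injective, and $\Z[\mathcal C][\tfrac{1}{N}]\sub\Z[\mathcal C^{*}][\tfrac{1}{N}]$ is still an integral extension with unity $1$. From $N\in\Z[\mathcal C^{*}]\cdot I$ we get $1=\tfrac{1}{N}\cdot N\in\Z[\mathcal C^{*}][\tfrac{1}{N}]\cdot\bigl(I\cdot\Z[\mathcal C][\tfrac{1}{N}]\bigr)$, so Remark~\ref{integral} yields $1\in I\cdot\Z[\mathcal C][\tfrac{1}{N}]$. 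Hence $N^{j}=h$ in $\Z[\mathcal C][\tfrac{1}{N}]$ for some $h\in I$ and $j\geq 0$, and by injectivity of the localization $N^{j}=h\in I$. Therefore $N^{j}\,1_{R}=0$, whence $N^{j}\,r=0$ for every $r\in R$, and $R$ is torsion.

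The step I expect to be the real obstacle is the reduction at the start: Lemma~\ref{nilpotent} is available only for factors of $\Z[\mathcal C]$ with $\mathcal C$ saturated, and passing to the closure $\widetilde{\mathcal C}$ — the natural enlargement used elsewhere in the paper — does not help here, because $\Z[\mathcal C]\sub\Z[\widetilde{\mathcal C}]$ is in general neither integral nor flat, so torsionness of the upstairs factor cannot be pushed back down. Switching to the normalization $\mathcal C^{*}$ restores integrality, and that integrality, via Remark~\ref{integral} applied after the harmless localization at the integer $N$, is exactly what transports the torsion conclusion from $\overline R$ to $R$.
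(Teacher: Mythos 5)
Your proof is correct and follows essentially the same route as the paper: pass to the saturation $\mathcal{C}^{*}$ (the paper's $\mathcal{C}'$), run Lemma~\ref{nilpotent} there to place an integer in the extended ideal, and descend to $I$ via integrality of $\Z[\mathcal{C}]\sub\Z[\mathcal{C}^{*}]$ using Remark~\ref{integral}. The only cosmetic differences are that the paper inverts all of $\Z\setminus\{0\}$ (working with $\Q[\mathcal{C}]\leq\Q[\mathcal{C}']$ and $J=\Q\cdot I$) where you invert only $N$, and that it cites Theorem~\ref{divisible} for the ring generated by the unity where you use the elementary classification of quotients of $\Z$.
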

\begin{proof}
Let $R$ be an additively almost-divisible ring  isomorphic to $\Z[\mathcal{C}]/I$ for some ideal $I$ in $\Z[\mathcal{C}]$.

Assume first, that the monoid $\mathcal{C}$ is saturated. According to Lemma \ref{nilpotent} the ring $S=R/\mathcal{N}_{\mathcal{T}}(R)$ is generated by a single element - the unity $1_S$. Also, $S$ is an additively almost-divisible ring. Hence, by Theorem \ref{divisible}, the ring $S$ is torsion. Thus $1_R\in\mathcal{N}_{\mathcal{T}}(R)$ and the ring $R$ is torsion. 

Now, let $\mathcal{C}$ be a general submonoid of $\N^{n}_{0}(+)$. Put $$\mathcal{C}'=\set{\alpha\in\N^{n}_{0}}{(\exists\ k\in\N)\ k\alpha\in\mathcal{C}}.$$
Then $\mathcal{C}'$ is a saturated submonoid of $\N^{n}_{0}(+)$. Set $I'=\Z[\mathcal{C}']\cdot I$. Then the ring $R'=\Z[\mathcal{C}']/I'$ is additively almost-divisible. By the first part of the proof, we obtain that $R'$ is torsion and, therefore, there is $m\in\N$  such that $m\in I'=\Z[\mathcal{C}']\cdot I$.

Further, $\Q[\mathcal{C}]\leq \Q[\mathcal{C}']$ is an integral extension of commutative rings (with the same unity) and the set $J=\Q\cdot I$ is an ideal of $\Q[\mathcal{C}]$. Then $\Q[\mathcal{C}']\cdot J=\Q\cdot \Z[\mathcal{C}']\cdot I=\Q\cdot I'$. From the fact that $m\in I'$ it follows that $1\in \Q[\mathcal{C}']\cdot J$. Thus, by Remark \ref{integral}, we obtain that $1\in J=\Q\cdot I$. In particular, this means that there is $k\in\N$ such that $k\in I$.

Therefore the ring $R$ is torsion, indeed. 
\end{proof}

In the next proposition we recall the structure of subrings of the field $\Q$ to show how the additive almost-divisibility is applied in this case.

\begin{proposition}\label{subrings}
 Let $R$ be a subring of $\Q$ such that $R\not\sub\Z$. Then there is $n\in\N$ and a non-empty set of prime numbers $P\sub\N$ such that $\gcd(n,p)=1$ for every $p\in P$ and $R=\set{\frac{nk}{q}}{q\in\N\ \text{is a product of primes from}\ P,\ k\in\Z}$. 
 
 The ring $R$ is finitely generated if and only if $P$ is a finite set. Moreover, $R$ is not finitely generated if and only if $R$ is additively almost-divisible.
\end{proposition}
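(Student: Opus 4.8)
The plan is to analyze the structure of subrings $R$ of $\Q$ directly, using the fact that $R$ contains $\Z$ (since $1 \in R$ and $R$ is closed under addition and negation). Given such an $R$, I would define $P$ to be the set of all primes $p$ such that $\frac{1}{p^j} \in R$ for some $j \geq 1$, equivalently (by taking products/powers) the set of primes $p$ for which $R$ contains an element with $p$ in the denominator in lowest terms. The first step is to pin down the denominators that can occur: I claim that $\frac{a}{b} \in R$ with $\gcd(a,b)=1$ forces every prime divisor of $b$ to lie in $P$, and conversely every finite product $q$ of primes from $P$ has $\frac{1}{q} \in R$ (using that if $\frac{1}{p}, \frac{1}{p'} \in R$ then $\frac{1}{pp'} \in R$, and iterating; one extracts $\frac{1}{p}$ from $\frac{a}{p^j}$ with $\gcd(a,p)=1$ via a $\Z$-linear combination $xa + yp = 1$). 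This shows the set of allowed denominators is exactly the multiplicative monoid generated by $P$, so $R = \{\frac{k}{q} \mid k \in \Z,\ q \text{ a product of primes from } P\}$. Since $R \not\subseteq \Z$, the set $P$ is non-empty. To match the stated form, I would take $n = 1$ when $R$ has no "extra" structure, but in fact the general statement allows a constant $n$; here $n=1$ suffices because $R$ being a ring containing $\Z$ already has $1 \in R$, and the set above is closed under multiplication precisely because $P$ is a set of primes — so $\gcd(n,p)=1$ is vacuous with $n=1$. (If one prefers the literal form with general $n$, one checks the displayed set is a ring iff $n=1$, so WLOG $n=1$.)

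Next, for the characterization of finite generation: if $P = \{p_1, \dots, p_r\}$ is finite, then $R = \Z[\frac{1}{p_1}, \dots, \frac{1}{p_r}]$ is visibly finitely generated as a ring. Conversely, if $R = \Z[\frac{a_1}{b_1}, \dots, \frac{a_m}{b_m}]$ for finitely many generators, then every element of $R$ is a polynomial with integer coefficients in the $\frac{a_i}{b_i}$, hence lies in $\Z[\frac{1}{b_1 \cdots b_m}]$, so only the finitely many primes dividing $b_1 \cdots b_m$ can appear in a denominator; thus $P$ is finite. This gives the equivalence "$R$ finitely generated $\iff$ $P$ finite."

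For the last equivalence, "$R$ not finitely generated $\iff$ $R$ additively almost-divisible": if $R$ is not finitely generated, then by the previous step $P$ is infinite, and for the element $1_R = 1$ and any prime $p \in P$ we have $1 = p \cdot \frac{1}{p}$ with $\frac{1}{p} \in R$; more generally every $\frac{k}{q} \in R$ satisfies $\frac{k}{q} = p \cdot \frac{k}{pq}$ with $\frac{k}{pq} \in R$ for every $p \in P$ (since $pq$ is again a product of primes from $P$). Hence every element of $R$ is strongly almost-divisible with respect to the infinite prime set $P$, so $R$ is additively (strongly almost-)divisible, in particular additively almost-divisible. Conversely, suppose $R$ is additively almost-divisible; then in particular $1_R$ is almost-divisible, so there is $k \in \N$ and an infinite set $P_0$ of primes with $k \cdot 1_R = k$ divisible by every $p \in P_0$ inside $R$, i.e. $\frac{k}{p} \in R$ for all $p \in P_0$. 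For all but finitely many $p \in P_0$ we have $p \nmid k$, hence $\gcd(k,p)=1$ and $\frac{1}{p} \in R$ (again by a $\Z$-linear combination $xk + yp = 1$, so $\frac{1}{p} = x\frac{k}{p} + y \in R$). Thus infinitely many primes lie in $P$, so $R$ is not finitely generated.

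The main obstacle, such as it is, is bookkeeping about greatest common divisors and the passage from "$\frac{k}{q} \in R$ with $q$ a product of $P$-primes" to "$\frac{1}{q} \in R$": one must be careful that a single element $\frac{a}{b}$ in lowest terms with $b = p_1^{e_1}\cdots p_s^{e_s}$ really yields each $\frac{1}{p_i} \in R$, which follows by multiplying by an appropriate integer to clear the other primes and then a Bézout step for the remaining prime power. Once that normal form is established, all three equivalences are short. I expect no serious difficulty; the proposition is essentially a structural warm-up recording how almost-divisibility manifests for subrings of $\Q$, to be fed into the proof that no semiring in $\mathfrak{S}$ contains a non-finitely generated subring of $\Q$.
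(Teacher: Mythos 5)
Your argument has a genuine gap at the very first step: you assume $1\in R$, hence $\Z\sub R$, and reduce to the case $n=1$. The proposition does not assume that $R$ is unital --- in this paper a (sub)ring need not contain a unity, and the integer $n$ in the statement is precisely the positive generator of $R\cap\Z$, which can be larger than $1$. For instance $R=\set{\frac{2k}{3^j}}{k\in\Z,\ j\geq 0}$ is a subring of $\Q$ not contained in $\Z$ with $1\notin R$ and $n=2$. Your parenthetical claim that the displayed set ``is a ring iff $n=1$'' is false: for any $n$ with $\gcd(n,p)=1$ for all $p\in P$ the displayed set is closed under both operations, since $\frac{nk}{q}\cdot\frac{nk'}{q'}=\frac{n(nkk')}{qq'}$ is again of the required form. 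Nor is the general $n$ a cosmetic detail: Corollary \ref{corollary_rings} applies this proposition to a subring $T$ of a ring $R$ that need not contain $1_R$, and the subsequent construction $R[x]_{/(1-nx)}$ there exists precisely to invert this $n$.

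Apart from this, your plan is essentially the paper's: establish $R\cap\Z=n\Z$, extract $\frac{n}{p}$ from any element with $p$ in its denominator by a B\'ezout step, conclude that $R$ is generated by $\{n\}\cup\set{\frac{n}{p}}{p\in P}$, and read off both equivalences (finite generation iff $P$ finite; almost-divisibility iff $P$ infinite). All of your manipulations, including the two equivalences at the end, go through after systematically replacing the element $1$ by $n$ (e.g.\ the converse of the last equivalence should test almost-divisibility of $n$ rather than of $1_R$, which need not exist), so the repair is routine; but as written your proof only covers the unital subrings of $\Q$ and does not produce the integer $n$ that the statement, and its later application, require.
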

\begin{proof}
 Since $R\neq \{0\}$, there is $n\in\N$ such that $R\cap \Z=n\Z$. Let
 $P$ be the set of all prime numbers $p\in\N$ that there are $a,b\in\N$ such that $\tfrac{a}{pb}\in R$ and $\gcd(a,pb)=1$. As $R\not\sub\Z$, clearly, $P\neq\emptyset$. 
 
 Further, for $p\in P$, by the definition of $P$, we have that $\frac{a}{p}=b\frac{a}{pb}\in R$ for some $a,b\in\N$ such that $\gcd(a,p)=1$. Further, $a=p\frac{a}{p}\in\R\cap\Z$ and therefore $a=nd$ for some $d\in\N$ and $\gcd(n,p)=\gcd(d,p)=1$. Hence there are $\alpha,\beta\in\Z$ such that $\alpha d+\beta p=1$ and we obtain that $R$ contains the element $\alpha\frac{nd}{p}+\beta n=\frac{n(\alpha d+\beta p)}{p}=\frac{n}{p}$. 
 
 Set $\overline{P}=\set{q\in\N}{q\ \text{is a product of primes from}\ P}$.  Now, we show that $\frac{n}{q}\in R$ for every $q\in\overline{P}$. Let $q=p_1\cdots p_k$ where $p_1,\dots,p_k\in P$. According to the previous part of the proof $\frac{n^k}{q}=\frac{n}{p_1}\cdots\frac{n}{p_k}\in R$. As $\gcd(n,q)=1$, there are $\gamma,\delta\in\Z$ such that $\gamma n^{k-1}+\delta q=1$ and therefore we obtain that $\frac{n}{q}=\frac{n(\gamma n^{k-1}+\delta q)}{q}=\gamma \frac{n^{k}}{q}+\delta n\in R$. In this way we have also shown that the set $\{n\}\cup\set{\frac{n}{p}}{p\in P}$ generates the ring $R$ and that, indeed, it holds that  $R=\set{\frac{nk}{q}}{k\in\Z, q\in \overline{P}}$. Obviously, if $R$ is finitely generated, then only finitely many prime numbers appear  in the denominators of elements of $R$ and, therefore, $P$ is finite. 
 
The ring $R$ is now clearly additively almost-divisible if and only if $P$ is infinite and, consequently, if and only if $R$ is not finitely generated. 
\end{proof}

\begin{corollary}\label{corollary_rings}
  Let $R$ be  a ring that is a factor of the monoid ring $\Z[\mathcal{C}]$, where $\mathcal{C}$ is a submonoid of $\N^{n}_{0}(+)$. Then $R$ cannot contain as its subring any non-finitely generated subring of $\Q$.
\end{corollary}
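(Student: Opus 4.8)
The plan is to reduce the statement to Theorem~\ref{main_theorem_rings} by way of Proposition~\ref{subrings} together with a single localization. Suppose, towards a contradiction, that $R=\Z[\mathcal{C}]/I$ (with $\mathcal{C}$ a submonoid of $\N_0^n$) contains a non-finitely generated subring $A$ of $\Q$. Every subring of $\Z$ is finitely generated, so $A\not\sub\Z$ and Proposition~\ref{subrings} applies: there are $n\in\N$ and an \emph{infinite} set $P$ of primes, with $\gcd(n,p)=1$ for $p\in P$, such that $n\in A$ and $\tfrac{n}{p}\in A$ for every $p\in P$. Let $a\in R$ be the image of the rational number $n$ under the inclusion $A\hookrightarrow R$, and for $p\in P$ let $b_p\in R$ be the image of $\tfrac{n}{p}$. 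Since $A\hookrightarrow R$ is injective and $n\neq 0$ we have $a\neq 0$, and since the inclusion is additive we have $p\,b_p=a$ in $R$ for every $p\in P$.

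The key step is to pass to the localization $R'=R[a^{-1}]$. This keeps us inside the class covered by Theorem~\ref{main_theorem_rings}: indeed $\Z[\mathcal{C}][y]=\Z[\mathcal{C}\times\N_0]$, where $\mathcal{C}\times\N_0$ is a submonoid of $\N_0^{n+1}$, and $R'\cong R[y]/(\hat a y-1)$ (for a lift $\hat a$ of $a$) is a factor of $\Z[\mathcal{C}\times\N_0]$. Moreover, in $R'$ we have, for every $p\in P$,
$1=a^{-1}a=a^{-1}(p\,b_p)=p\,(a^{-1}b_p)$,
so the unity $1_{R'}$ is strongly almost-divisible with respect to $P$; multiplying these equalities by an arbitrary $r\in R'$ (using $r=r\cdot 1_{R'}$) shows that every element of $R'$ is strongly almost-divisible, i.e.\ $R'$ is additively almost-divisible. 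Hence Theorem~\ref{main_theorem_rings} applies to $R'$ and yields that $R'$ is torsion; in particular $m\cdot 1_{R'}=0$ in $R'$ for some $m\in\N$.

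Finally I would pull this back to $R$. By the definition of the localization, $m\cdot 1_{R'}=0$ means $m\,a^{j}=0$ in $R$ for some $j\in\N_0$, and after multiplying by $a$ we may assume $j\geq 1$. On the other hand $a^{j}$ is the image of $n^{j}\in A$ and $m\,a^{j}$ is the image of $m n^{j}\in A$; since $m n^{j}\neq 0$ in $\Q$ and $A\hookrightarrow R$ is injective, $m\,a^{j}\neq 0$ in $R$ --- a contradiction. Therefore $R$ contains no non-finitely generated subring of $\Q$.

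The step I expect to require the most care is the localization: Theorem~\ref{main_theorem_rings} is, in effect, a statement about additive almost-divisibility of the \emph{unity}, whereas a subring of $\Q$ need not contain $1_R$ (nor does its almost-divisibility obviously propagate to all of $R$). Inverting $a$ simultaneously manufactures an almost-divisible unity and keeps us within the class of factors of monoid rings $\Z[\mathcal D]$ with $\mathcal D$ a submonoid of a free commutative monoid of finite rank; the remaining manipulations are routine. (If one only wants the special case $A\ni 1_R$, the localization is unnecessary: then $1_R$ is directly additively almost-divisible by Proposition~\ref{subrings}, hence so is $R$, and Theorem~\ref{main_theorem_rings} applied to $R$ forces $m\cdot 1_R=0$, contradicting the torsion-freeness of the additive group of the nonzero ring $A$.)
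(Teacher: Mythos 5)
Your proof is correct and follows essentially the same route as the paper's: apply Proposition~\ref{subrings}, pass to a localization that is still a factor of a monoid ring $\Z[\mathcal{C}\oplus\N_{0}]$, arrange for the unity to be (strongly) almost-divisible with respect to the infinite prime set $P$, invoke Theorem~\ref{main_theorem_rings}, and contradict torsion-freeness via the injectivity of $A\hookrightarrow R$. The only cosmetic difference is that you invert the image $a$ of $n\in A$ directly, whereas the paper inverts $n\cdot 1_R$ (forming $R[x]_{/(1-nx)}$) and then cuts down to the corner ring $S\cdot 1_{T'}$; your variant merges those two steps.
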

\begin{proof}
Let $R$ be a factor of the monoid ring $\Z[\mathcal{C}]$ for some submonoid $\mathcal{C}$ is of $\N^{n}_{0}(+)$. Assume for contrary that $R$ contains as its subring a non-finitely generated subring $T$ of $\Q$. By Proposition \ref{subrings}, there is $n\in\N$ and an infinite set of prime numbers $P$ such that $\gcd(n,p)=1$ for every $p\in P$ and $T=\set{\frac{nk}{q}}{k\in\Z,\ q\in \overline{P}}$. Consider now the quotient ring $S=R[x]_{/(1-nx)}$ of the polynomial ring  $R[x]$ by its ideal generated by the polynomial $1-nx$. 

Let us show that the natural map $\varphi: T\to R[x]_{/(1-nx)}$ is an embedding. Let $a\in T$ and $f(x)=a_{k}x^{k}+\cdots+a_1 x+a_0\in R[x]$, for $k\geq 0$ be such that $a=f(x)\cdot (1-nx)=-na_kx^{k+1}+(a_{k}-na_{k-1})x^k+\cdots+(a_1-na_0)x+a_0$. Then $a_0=a$, $na_k=0$ and $a_{i+1}=na_{i}$ for every $i=0,\dots,k-1$. It follows that $n^{k+1}a=0$ and $a=0$. Hence $\varphi$ is indeed an embedding.

Let $b\in S$ be the inverse of the element $n\cdot 1_S$. Now, obviously, the set $T'=\varphi(T)\cdot b$ is a unitary subring of $S$ and it is isomorphic to a non-finitely generated subring of $\Q$. Further, $S$ is a factor of the monoid ring  $\Z[\mathcal{C}']$, where $\mathcal{C}'=\mathcal{C}\oplus\N_{0}$.

Finally, let $1_{T'}=n\cdot b$ be the unity of $T'$. Consider the ring epimorphism $\psi: S\rightarrow S\cdot 1_{T'}$. The ring $S'=S\cdot 1_{T'}$ is now a factor of the monoid ring  $\Z[\mathcal{C}']$, where $\mathcal{C}'=\mathcal{C}\oplus\N_{0}\sub\N_{0}^{k+1}$, and $S\cdot 1_{T'}$ contains a unitary subring $T'$ that is isomorphic to a non-finitely generated subring of $\Q$. Since $S'$ shares the unity with $1_T'$ and $T'$ is additively almost-divisible,  the ring $S'$ is additively almost-divisible, by Proposition \ref{subrings}, too. By Theorem \ref{main_theorem_rings}, $S'$ is torsion. This is a contradiction with the fact that $1_T'$ is torsionless.

We have shown that $R$ cannot contain a non-finitely generated subring of $\Q$.
\end{proof}

\section{Additively almost-divisible factors of monoid semiring $\N[\mathcal{C}]$  are  torsion}\label{section 3}

In this section we prove our main result on semirings. First, let us recall the following helpful notion. There is a well-know construction of the Grothendieck group that produces an abelian group from an additive commutative semigroup. If we adopt the same construction for a commutative semiring $S$ we obtain a commutative ring $G(S)$. We only need to define the multiplicative operation. 

In particular, the construction is as follows. On the set $S\times S$ we define the following operations $\oplus$ and $\odot$ as $$(x,y)\oplus(x',y')=(x+x',y+y')\ \ \text{and}\ \ (x,y)\odot(x',y')=(xx'+yy',xy'+x'y)$$ and a relation $\approx$ as $$(x,y)\approx (x',y')\ \Leftrightarrow\ (\exists t\in S)\ x+y'+t=x'+y+t$$
for every $x,x',y,y'\in S$. With the same effort as for the Grothendieck group one can check that $(S\times S)(\oplus,\odot)$ is a semiring and $\approx$ is a congruence on this semiring. Now we simply set $G(S)=(S\times S)_{/\approx}$. The map $\sigma_S:S\to G(S)$, defined as $\sigma_S(x)=(2x,x)_{/\approx}$ for $x\in S$,  is then a semiring homomorphism (similarly as for the Grothendieck semigroup). Let us notice that $(x,y)_{/\approx}=\sigma_{S}(x)-\sigma_{S}(y)$ for all $x,y\in S$. 

\

Unfortunately, some properties of the ring $G(S)$ do not imply easily  the corresponding properties of the original semiring $S$. Therefore we will need another auxiliary notion.  For a semiring $S$ with a unity $1_S$ we put $$Q_{S}=\{x\in S|(\exists k\in \N)(\exists a\in S)\ x+a= k\cdot 1_{S}\}\ .$$  

\begin{proposition}\label{q}
Let $S$ be semiring with a unity. Then $Q_{S}$ is a subsemiring of $S$ and for all $x,y\in S$ such that $x+y\in Q_{S}$ is $x,y\in Q_{S}$. Moreover, $S$ is additively almost-divisible if and only if $Q_S$ is additively almost-divisible. 
\end{proposition}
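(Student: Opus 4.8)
The plan is to work directly from the definition $Q_S=\{x\in S\mid (\exists k\in\N)(\exists a\in S)\ x+a=k\cdot 1_S\}$ throughout; all four assertions reduce to short manipulations once one spots the right elements to add. First I would check that $Q_S$ is a subsemiring. For closure under addition, if $x+a=k\cdot 1_S$ and $y+b=\ell\cdot 1_S$ then $(x+y)+(a+b)=(k+\ell)\cdot 1_S$. For closure under multiplication the trick is to multiply the two witnessing equations: $(x+a)(y+b)=k\ell\cdot 1_S$, and expanding the left-hand side gives $xy+(xb+ay+ab)=k\ell\cdot 1_S$, so $xy\in Q_S$. Since also $1_S\in Q_S$ (take $a=1_S$, $k=2$), $Q_S$ is a subsemiring with the same unity. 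For the face-type property, if $x+y\in Q_S$ pick $a\in S$, $k\in\N$ with $(x+y)+a=k\cdot 1_S$; rewriting as $x+(y+a)=k\cdot 1_S$ shows $x\in Q_S$, and symmetrically $y\in Q_S$. Iterating, every summand of a finite sum that lies in $Q_S$ lies in $Q_S$.

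For the forward implication of the equivalence, suppose $S$ is additively almost-divisible and take $x\in Q_S$. Then $x$, as an element of $S$, is almost-divisible: there are $m\in\N$ and an infinite set $P$ of primes so that for each $p\in P$ we have $mx=pc_p$ for some $c_p\in S$. Since $x\in Q_S$ and $Q_S$ is closed under addition, $mx\in Q_S$; hence $pc_p=c_p+\cdots+c_p\in Q_S$, and the face property forces $c_p\in Q_S$. Thus the same data $(m,P,(c_p)_{p\in P})$ witnesses that $x$ is almost-divisible inside $Q_S$, so $Q_S$ is additively almost-divisible.

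For the reverse implication, suppose $Q_S$ is additively almost-divisible. Since $1_S\in Q_S$, the unity is almost-divisible in $Q_S$: there are $m\in\N$, an infinite set $P$ of primes, and elements $c_p\in Q_S\subseteq S$ with $m\cdot 1_S=pc_p$ for every $p\in P$. Now for an arbitrary $x\in S$, multiplying by $x$ and using distributivity gives $mx=x(m\cdot 1_S)=x(pc_p)=p(xc_p)$ for every $p\in P$, so $mx$ is strongly almost-divisible in $S$ and hence $x$ is almost-divisible in $S$. Therefore $S$ is additively almost-divisible.

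The only genuinely nontrivial point is recognizing that the face property of $Q_S$ is precisely what lets one transport divisibility witnesses back from $S$ into $Q_S$ in the forward direction, and that in the reverse direction it suffices to treat the single element $1_S$ and then multiply through by an arbitrary $x\in S$; everything else is bookkeeping with the defining condition.
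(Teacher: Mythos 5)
Your proof is correct and follows essentially the same route as the paper: direct manipulation of the defining condition of $Q_S$ for the subsemiring and face properties, and transporting divisibility witnesses via the face property (forward direction) and multiplication by $c_p$ (reverse direction). If anything, you are slightly more explicit than the paper, which treats only $1_S$ in the forward direction and declares the reverse implication obvious; your elementwise argument fills in exactly the routine details the paper omits.
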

\begin{proof}
 Clearly, $1_S\in Q_S$. Let $u,v\in Q_S$. Then $u+a= k\cdot 1_{S}$ and $v+b= \ell\cdot 1_{S}$ for some $a,b\in S$ and $k,\ell\in\N$. Hence $k\ell\cdot 1_{S}=(u+a)(v+b)=uv+(ub+av+ab)$ and $(k+\ell)\cdot 1_{S}=(u+v)+(a+b)$ and therefore $Q_S$ is a subsemiring. Clearly, if $x,y\in S$ are such that $x+y\in\Q_S$ then both $x$ and $y$ belong to $Q_S$.
 
 Assume that $S$ is additively almost-divisible. Then the element $1_S$ is almost divisible (in $S$). Hence there are $k\in\N$ and an infinite set $P$ of prime numbers such that for every $p\in P$ there is $a\in S$ such that $k\cdot 1_{S}=p\cdot a=a+(p-1)\cdot a$. Therefore $a\in Q_S$ and using the same equality we obtain that $1_S$ in an almost divisible element in $Q_S$. Thus the semiring $Q_S$ is additively almost-divisible. The opposite implication is obvious.   
\end{proof}

\begin{proposition}\label{equivalence_divisibility}
The semiring $S$ is torsion if and only if $S$ is additively almost-divisible and the  ring  $G(Q_{S})$ is torsion. 	
\end{proposition}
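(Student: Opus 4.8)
The plan is to prove the two implications separately, using Proposition \ref{q} to reduce from $S$ to $Q_S$ and the Grothendieck ring $G(Q_S)$ to linearize the additive structure. The forward implication is the easy one: if $S$ is torsion, then every element of $S$ generates a finite cyclic subsemigroup, hence $S$ is additively almost-divisible (indeed, by the diagram in the Remark, torsion $\Rightarrow$ almost-divisible — one can take $P$ to be the set of primes coprime to the relevant period). For the ring $G(Q_S)$, I would note that $Q_S$ is torsion as well (it is a subsemiring of the torsion semiring $S$), and that the Grothendieck construction sends a torsion commutative semigroup to a torsion abelian group: if $n a = m a$ in $Q_S$ with $n > m$, then $\sigma_{Q_S}(a)$ has order dividing $n - m$ in $G(Q_S)$, and since every element of $G(Q_S)$ is a difference $\sigma_{Q_S}(x) - \sigma_{Q_S}(y)$, the whole ring $G(Q_S)$ is torsion.

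For the converse, suppose $S$ is additively almost-divisible and $G(Q_S)$ is torsion; I must show $S$ is torsion. By Proposition \ref{q}, since $S$ is additively almost-divisible so is $Q_S$, and it suffices to show $Q_S$ is torsion — indeed, I claim it then follows that $S$ is torsion: every $x \in S$ satisfies $x + (\text{something}) \in Q_S$ only if $x \in Q_S$, but that is the wrong direction, so more care is needed here. The right move is: if $Q_S$ is torsion, then in particular $1_S$ is a torsion element, say $(n+1)\cdot 1_S = 1\cdot 1_S + n \cdot 1_S$ collapses, i.e. there is $k$ with $(k+j)\cdot 1_S = k \cdot 1_S$ for some $j \geq 1$; multiplying an arbitrary $x \in S$ by this relation gives $(k+j) x = k x$, so $x$ is torsion and $S$ is torsion. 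So the crux is: \emph{additive almost-divisibility of $Q_S$ plus torsionness of $G(Q_S)$ implies $Q_S$ is torsion.}

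To handle that crux, the idea is to transfer the almost-divisibility from $Q_S$ into the torsion ring $G(Q_S)$ via $\sigma_{Q_S}$, conclude the relevant elements are torsion in $G(Q_S)$, and then pull this back through the defining property of $Q_S$. Concretely: for $x \in Q_S$ almost-divisible, $\sigma_{Q_S}(x)$ is almost-divisible in $G(Q_S)$; but $G(Q_S)$ is torsion, so $\sigma_{Q_S}(x)$ has finite order, say $N x \approx$ (in the sense that $\sigma_{Q_S}(Nx) = \sigma_{Q_S}(0)$ would require a zero, so instead) $\sigma_{Q_S}((N+1)x) = \sigma_{Q_S}(x)$, which unwinds to $(N+1)x + t = x + t$ in $Q_S$ for some $t \in Q_S$. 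Now using $x + a = k\cdot 1_S$ and $t + b = \ell \cdot 1_S$ for suitable $a,b \in Q_S$, add enough copies of $a$ and $b$ to both sides of $(N+1)x + t = x + t$ to replace the "$+t$" and absorb the periodicity into multiples of $1_S$, ultimately obtaining a relation of the form $(M + N')\cdot 1_S = M \cdot 1_S$ in $Q_S$ for some $M, N' \in \N$ with $N' \geq 1$. That makes $1_S$ torsion in $Q_S$, hence (as above) $Q_S$, and then $S$, is torsion.

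The main obstacle will be the bookkeeping in that last step: the Grothendieck relation only gives cancellation after adding an auxiliary term $t$, and one must carefully use membership in $Q_S$ (the existence of additive "complements" landing on multiples of $1_S$) to convert statements about a single element $x$, or about $t$, into statements about $1_S$ itself, so that torsionness of one element propagates to the whole semiring. The almost-divisibility is used precisely to guarantee that the elements one wants to control (in particular $1_S$, or a fixed multiple thereof) are the ones to which the $G(Q_S)$-torsion argument applies; keeping track of which multiple of $1_S$ is torsion, and ensuring the exponent $N'$ is genuinely positive, is the delicate point. Everything else is a routine combination of the Grothendieck functoriality, Proposition \ref{q}, and the elementary fact that a torsion unity forces a torsion (semi)ring.
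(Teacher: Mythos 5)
Your proposal is correct and follows essentially the same route as the paper: use torsionness of $G(Q_S)$ to extract a relation $t=k\cdot 1_S+t$ with $t\in Q_S$, cancel $t$ via its additive complement $t+s=k'\cdot 1_S$ from the definition of $Q_S$ to make $1_S$ torsion, and then multiply by an arbitrary element to conclude; the forward direction is the diagram plus the observation that the Grothendieck ring of a torsion semiring is torsion. The only superfluous worry is your use of almost-divisibility in the converse — since $1_S\in Q_S$ automatically, torsionness of $G(Q_S)$ alone already yields the needed relation.
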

\begin{proof}
Let  $S$ be additively almost-divisible and let $G(Q_{S})$ be torsion. Then there are $t\in Q_{S}$ and $k\in\N$, such that $t=k\cdot 1_{S}+t$. By the definition of $Q_S$, there are $k'\in\N$ and $s\in S$ such that $t+s=k'\cdot 1_{S}$. Therefore we obtain $k'\cdot 1_{S}=k\cdot 1_{S}+k'\cdot 1_{S}$ and the element $1_S$ is torsion in $S$. Hence the semiring $S$ is torsion.  

The other implication follows immediately from the diagram in Section \ref{preliminaries}.
\end{proof}

Let $P$ be a non-empty subset of the set of all prime numbers. Consider the following subsemiring of $\Q^+$: $$\N_{P}=\set{\tfrac{k}{n}}{k\in\N,\ n \ \text{is a product of primes from}\ P }\ .$$

Clearly, $\N_P$ is the positive part of a corresponding localization of the ring $\Z$.

\

The following proposition provides a simple characterization of strong almost-divisibility for semirings with a unity.

\begin{proposition}
 Let $S$ be a semiring with a unity $1_S$. Then the following are equivalent:
 \begin{enumerate}
  \item[(i)] There is an infinite set of primes $P$ and a semiring homomorphism $\varphi:\N_P\rightarrow S$ such that $\varphi(1_{\N_P})=1_S$.
  \item[(ii)] $S(+)$ is strongly almost-divisible.
 \end{enumerate}
\end{proposition}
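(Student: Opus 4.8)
The plan is to prove the equivalence directly in both directions, using the explicit structure of $\N_P$ and the definition of strong almost-divisibility from Section \ref{preliminaries}. The easy direction is (i)$\Rightarrow$(ii): given $\varphi:\N_P\to S$ with $\varphi(1_{\N_P})=1_S$, I would first observe that in $\N_P$ every element is strongly almost-divisible (indeed, for $\tfrac{k}{n}\in\N_P$ and any prime $p\in P$ not dividing the numerator, $\tfrac{k}{n}=p\cdot\tfrac{k}{np}$ with $\tfrac{k}{np}\in\N_P$, and there are infinitely many such $p$). Then I would push this through $\varphi$: for an arbitrary $x\in S$, I want to show $x$ is strongly almost-divisible in $S$. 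The point is that $x=x+\ell\cdot 1_S$-type identities are not immediately available, so instead I would argue that $1_S$ is strongly almost-divisible in $S$ (as the image of $1_{\N_P}$, which is $p\cdot\tfrac{1}{p}$ for every $p\in P$), and then for general $x$ write $p\cdot(x\cdot\varphi(\tfrac{1}{p}))=x\cdot\varphi(p\cdot\tfrac1p)=x\cdot\varphi(1_{\N_P})=x\cdot 1_S=x$, so $x=p\cdot c$ with $c=x\cdot\varphi(\tfrac1p)\in S$, for every $p\in P$. Since $P$ is infinite, $x$ is strongly almost-divisible, and hence so is $S(+)$.

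For (ii)$\Rightarrow$(i), I would start from the fact that $1_S$ is strongly almost-divisible: there is an infinite set of primes $P$ so that for each $p\in P$ there is $c_p\in S$ with $1_S=p\cdot c_p$. The idea is to use the $c_p$ to build a homomorphic image of $\N_P$ inside $S$. I would first check that one may shrink $P$ so that the elements $c_p$ multiply coherently; more precisely, I would try to define $\varphi$ on generators of $\N_P$ by sending $\tfrac1p\mapsto c_p$ and $1\mapsto 1_S$, and verify that this extends to a semiring homomorphism. The key relations to verify are $p\cdot c_p=1_S$ (given) and that the $c_p$ for distinct $p$ commute and behave like reciprocals, e.g. $c_p\cdot c_q$ should serve as the image of $\tfrac1{pq}$, which forces $pq\cdot(c_pc_q)=(p c_p)(q c_q)=1_S\cdot 1_S=1_S$ — so this is automatic. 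The subtlety is well-definedness: an element of $\N_P$ has many representations $\tfrac{k}{n}$, and I must ensure the value $k\cdot(\text{product of }c_p\text{'s over the prime factors of }n)$ is independent of the representation. This reduces to checking that if $\tfrac{k}{n}=\tfrac{k'}{n'}$ in $\Q^+$, i.e. $kn'=k'n$, then the corresponding elements of $S$ agree; multiplying both candidate images by $nn'$ (i.e. by the integer $nn'\cdot 1_S$) and using $n\cdot(\prod_{p\mid n}c_p)=1_S$ collapses both sides to $kn'\cdot 1_S = k'n\cdot 1_S$, and then cancellation is \emph{not} available in a general semiring — so instead I would phrase the target as: the map on the \emph{presented} semiring $\N_P$ (which I can describe as a localization-type quotient) is well defined because the defining congruence of $\N_P$ is generated by relations that hold in $S$ after applying the assignment.

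The cleanest way to organize this, and the step I expect to be the main obstacle, is the construction of $\varphi$ without cancellation. I would handle it by presenting $\N_P$ as follows: $\N_P$ is the image of the polynomial semiring $\N[\,y_p : p\in P\,]$ under the map sending $y_p\mapsto \tfrac1p$, and the kernel congruence is generated by the relations $p\cdot y_p = 1$ (here $1$ means the constant $1$, i.e.\ $y_p$ added to itself $p$ times). Then the assignment $y_p\mapsto c_p$, $1\mapsto 1_S$ defines a semiring homomorphism $\N[\,y_p\,]\to S$ which respects all generating relations (since $p\cdot c_p=1_S$), hence factors through $\N_P$, giving $\varphi:\N_P\to S$ with $\varphi(1_{\N_P})=1_S$. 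The one thing to double-check here is that this presentation of $\N_P$ is correct — that the congruence on $\N[\,y_p\,]$ generated by $\{p\cdot y_p = 1\}$ has quotient exactly $\N_P$ and not something larger; this is a routine but slightly delicate verification about semiring localizations, and I would prove it by exhibiting the inverse map $\N_P\to \N[\,y_p\,]/{\sim}$ sending $\tfrac{k}{p_1\cdots p_r}\mapsto k\cdot y_{p_1}\cdots y_{p_r}$ and checking it is well defined using the generating relations. Once $\varphi$ exists, (i) follows. (As a side remark, I would note that (ii) does not by itself give a \emph{canonical} $P$ — any infinite set of primes witnessing strong almost-divisibility of $1_S$ works, and one then gets a homomorphism from the corresponding $\N_P$; this matches the statement, which only asserts existence of some infinite $P$.)
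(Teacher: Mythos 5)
Your proof is correct. Direction (i)$\Rightarrow$(ii) is the same as the paper's; you in fact supply the step the paper leaves implicit, namely passing from strong almost-divisibility of $1_S$ to that of an arbitrary $x\in S$ via $x=x\cdot 1_S=x\cdot\varphi\bigl(p\cdot\tfrac1p\bigr)=p\cdot\bigl(x\cdot\varphi(\tfrac1p)\bigr)$. For (ii)$\Rightarrow$(i) you take a genuinely different route to the one delicate point, well-definedness of $\varphi$. The paper observes that the witness $c_p$ with $1_S=p\cdot c_p=(p\cdot 1_S)\cdot c_p$ is the multiplicative inverse of $p\cdot 1_S$ in $S(\cdot)$ and hence uniquely determined; consequently $k\cdot c_{p_1}\cdots c_{p_m}$ equals $k$ times the inverse of $(p_1\cdots p_m)\cdot 1_S$, and the cancellation you worried about is in fact available (if $kn'=k'n$, multiply through by the invertible elements $n\cdot 1_S$ and $n'\cdot 1_S$), so the assignment $\tfrac{k}{p_1\cdots p_m}\mapsto k\cdot c_{p_1}\cdots c_{p_m}$ is well defined with no further machinery. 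You instead present $\N_P$ as the quotient of the polynomial semiring $\N[\,y_p:p\in P\,]$ by the congruence generated by the relations $p\cdot y_p=1$ and factor the evaluation $y_p\mapsto c_p$ through it; this works, but it does hinge on the verification you flag, that this congruence is exactly the kernel of $\N[\,y_p\,]\to\N_P$ (reduce every element to the form $k\cdot y_{p_1}\cdots y_{p_r}$ using the relations and compare reduced fractions), which is essentially the same arithmetic the direct approach needs. In short: your presentation argument is a sound and more self-contained way to organize well-definedness; the paper's is shorter because the uniqueness-of-inverses observation does all the work.
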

\begin{proof}
 (i) $\Rightarrow$ (ii) The unity $1_{\N_P}$ is strongly almost-divisible. So is the unity $1_S=\varphi(1_{\N_P})$ and $S(+)$ is therefore strongly almost-divisible.
 
 (ii) $\Rightarrow$ (i) Let $P$ be an infinite set of primes such that for every $p\in P$ there is $a_p\in S$ such that $1_S=(p\cdot 1_S) \cdot a_p$. The element $a_p$ is invertible and therefore it is uniquely determined. For $k\in\N$ and $p_1,\dots,p_{m}\in P$ put $\varphi\left(\frac{k}{p_1\cdots p_{m}}\right)=k\cdot a_1\cdots a_m$. It is now easy to verify that $\varphi: \N_P\to S$ is a well defined semiring homomorphism.
 \end{proof}

 We are now in position to prove our main result. Let us recall that $\N[\mathcal{C}]=\set{\sum_{i=1}^m k_i \x^{\alpha_i}}{m\in\N, k_i\in\N, \alpha_i\in\mathcal{C}}$ is a subsemiring of the ring $\Z[\mathcal{C}]$ for  a submonoid $\mathcal{C}$ of $\N^{n}_{0}(+)$ for some $n\in\N$.

\begin{theorem}\label{main-theorem}
Let $S$ be a factor of a monoid semiring $\N[\mathcal{C}]$, where $\mathcal{C}$ is a submonoid of $\N^{n}_{0}(+)$. Then:
\begin{enumerate}
\item[(i)] The semigroup $S(+)$ is almost-divisible if and only if $S(+)$ is torsion.
\item[(ii)] The semigroup $S(+)$ is strongly almost-divisible  if and only if $S(+)$ is both regular and torsion.
\item[(iii)] The semiring $S$ cannot contain the subsemiring $\N_P$ for any infinite set $P$ of prime numbers.
\end{enumerate}
\end{theorem}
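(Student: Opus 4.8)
The plan is to reduce all three statements to the ring case, Theorem~\ref{main_theorem_rings}, by means of the apparatus of Section~\ref{section 3} (the subsemiring $Q_S$, the Grothendieck ring $G(\cdot)$, and Propositions~\ref{q} and~\ref{equivalence_divisibility}); the ``easy'' implications in (i) and (ii) are the vertical arrows of the diagram in Section~\ref{preliminaries}, applied to each element.

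For the substantial direction of (i), suppose $S(+)$ is almost-divisible and fix a semiring epimorphism $\phi\colon\N[\mathcal{C}]\twoheadrightarrow S$, so that $1_S=\phi(\x^{0})$. By Proposition~\ref{q}, $Q_S$ is an almost-divisible subsemiring of $S$, and the crucial step is to present $Q_S$ itself as a factor of an admissible monoid semiring. I would put $\mathcal{D}=\set{\alpha\in\mathcal{C}}{\phi(\x^{\alpha})\in Q_S}$; since $Q_S$ is multiplicatively closed and contains $1_S=\phi(\x^{0})$, the set $\mathcal{D}$ is a submonoid of $\N_{0}^{n}$. Using the ``face'' property of $Q_S$ from Proposition~\ref{q} --- that $a+b\in Q_S$ forces $a,b\in Q_S$ --- one checks that whenever $f=\sum_i k_i\x^{\alpha_i}\in\N[\mathcal{C}]$ satisfies $\phi(f)\in Q_S$, then already every exponent $\alpha_i$ lies in $\mathcal{D}$; hence $\phi^{-1}(Q_S)=\N[\mathcal{D}]$ and $\phi$ restricts to an epimorphism $\N[\mathcal{D}]\twoheadrightarrow Q_S$. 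Passing to Grothendieck rings, $G(Q_S)$ is a factor of $G(\N[\mathcal{D}])=\Z[\mathcal{D}]$; moreover $\sigma_{Q_S}$ is a semiring homomorphism carrying $1_S$ to the unity of $G(Q_S)$, so almost-divisibility of $1_S$ in $Q_S$ makes the unity of $G(Q_S)$ almost-divisible, and by distributivity the whole ring $G(Q_S)$ is then additively almost-divisible. Theorem~\ref{main_theorem_rings} forces $G(Q_S)$ to be torsion, and Proposition~\ref{equivalence_divisibility} concludes that $S$ is torsion.

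Part~(iii) will then be immediate: if $\N_P\subseteq S$ for an infinite set of primes $P$, set $e=1_{\N_P}$, a multiplicative idempotent of $S$; then $x\mapsto ex$ is a semiring epimorphism of $S$ onto the subsemiring $e\cdot S$ with unity $e$, so $e\cdot S$ is again a factor of $\N[\mathcal{C}]$. As $\N_P\subseteq e\cdot S$ shares its unity $e$ with $e\cdot S$ and is almost-divisible, the semiring $e\cdot S$ is additively almost-divisible, hence torsion by~(i); but $\set{ne}{n\in\N}$ contains the infinite set $\{1,2,3,\dots\}\subseteq\N_P$, a contradiction. For the substantial direction of (ii), strong almost-divisibility implies almost-divisibility, so $S$ is torsion by~(i); in particular $1_S$ is torsion, which yields $i,d\in\N$ with $(i+d)\cdot1_S=i\cdot1_S$, hence $(j+d)a=ja$ for every $a\in S$ and every $j\ge i$. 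Given $x\in S$, I would pick a prime $p>\max(i,d)$ and an element $c\in S$ with $x=pc$ (possible since the prime set witnessing strong almost-divisibility of $x$ is infinite); then $(1+d)x=\bigl((1+d)p\bigr)c=pc=x$, because $(1+d)p\equiv p\pmod d$ and $(1+d)p,p\ge i$. Therefore $x$ lies in the finite subgroup $\set{\ell x}{\ell\in\N}$ of $S(+)$ and is regular; thus $S(+)$ is both regular and torsion.

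The step I expect to be the main obstacle is the identity $\phi^{-1}(Q_S)=\N[\mathcal{D}]$ in part~(i): it is exactly what permits the descent to the ring-theoretic Theorem~\ref{main_theorem_rings}, and it rests on the face property of $Q_S$ established in Proposition~\ref{q}. The remaining work is the routine transport of almost-divisibility and torsion along the homomorphisms $\phi$, $\sigma_{Q_S}$ and $x\mapsto ex$.
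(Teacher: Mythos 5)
Your proposal is correct and follows essentially the same route as the paper: part (i) is reduced via $Q_S$, the face property from Proposition~\ref{q}, the submonoid $\mathcal{D}=\set{\alpha\in\mathcal{C}}{\phi(\x^{\alpha})\in Q_S}$ and the Grothendieck ring to Theorem~\ref{main_theorem_rings}, then Proposition~\ref{equivalence_divisibility} is applied; (ii) combines torsion of $1_S$ with a large prime from strong almost-divisibility to place each element in a finite cyclic group; (iii) multiplies by the idempotent $1_{\N_P}$ and invokes the earlier parts. The only differences are cosmetic (you verify $(1+d)x=x$ directly rather than landing $a=px$ in the group part $G_x$, and you invoke (i) instead of (ii) in part (iii)).
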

\begin{proof}
(i) By Proposition \ref{equivalence_divisibility}, we only need to show that the ring $G(Q_S)$ is torsion. 

Let $\pi:\N[\mathcal{C}]\to S$ be a semiring epimorphism. Put $\mathcal{B}=\set{\alpha\in\mathcal{C}}{\pi(\x^{\alpha})\in Q_S}.$
Clearly, $\mathcal{B}$ is a submonoid of $\mathcal{C}$ and the set $\set{\pi(\x^{\alpha})}{\alpha\in\mathcal{B}}$ generates the additive semigroup $Q_S(+)$. The additive group $G(Q_S)(+)$ of the ring $G(Q_S)$ is therefore generated by the set $\set{\sigma(\pi(\x^{\alpha}))}{\alpha\in\mathcal{B}}$, where $\sigma=\sigma_{Q_S}:Q_S\to G(Q_S)$ is the corresponding semiring homomorphism. Thus the ring $G(Q_S)$ is a homomorphic image of the ring $\Z[\mathcal{B}]$. If $S$ is additively almost-divisible then  $Q_S$ is additively almost-divisible, by \ref{q}. Also the ring $G(Q_S)$ is then additively almost-divisible.
Now, by Proposition \ref{main_theorem_rings}, the ring $G(Q_S)$ is trivial. Therefore, by Proposition \ref{equivalence_divisibility}, the semiring $S$ is additively idempotent. 

The opposite implication is obvious.

(ii)  By (i), we only need to prove that if $S(+)$ is strongly almost-divisibility then $S(+)$ is regular. The semiring $S$ has a unity $1_S$. By (i), $1_S$ is torsion and therefore there is $k\in\N$ such that $k\cdot 1_S=2k\cdot 1_S$. Consequently,  $kx=2kx$ for every $x\in S$. By the basic properties of the cyclic semigroups, the set $G_x=\set{\ell x}{\ell\in\N, \ell\geq k}$ is a finite additive group for every $x\in S$. Now, by the strong almost-divisibility, for every $a\in S$ there is a prime number $p\in\N$ big enough such that $a=px\in G_x$ for some $x\in S$. Thus the element $a$ is contained in an additive group and the semigroup $S(+)$ is therefore regular. 

(iii) Assume for contrary, that there is an infinite set $P$ of primes such that $\N_P$ is a subsemiring of $S$. Let  $S'=S\cdot 1_{\N_P}$. Then $S'$ is a subsemiring of $S$ and  the map $\psi:S\to S'$, where $\psi(a)=a\cdot 1_{\N_P}$, is a semiring epimorphism. Now, $1_{S'}=1_{\N_P}$ and the additive semigroup $S'(+)$ is therefore strongly almost-divisible. By (ii), $S'(+)$ is torsion, while $1_{S'}=1_{\N_P}$ is a torsionless element in $\N_P$, a contradiction.
\end{proof}

We have therefore confirmed Conjecture \ref{conjecture} for commutative semirings with a unity.

% = Bibliography ==============================================================

\end{document}